\numberwithin{equation}{section}
\newtheorem{thm}{Theorem}[section]
\newtheorem{lema}{Lemma}[section]
\theoremstyle{definition}\theoremstyle{plain}\theoremstyle{remark}
\newtheorem{rk}{\textbf{Remark}}
\newtheorem{theorem}{\textbf{Theorem}}
\renewcommand{\thefootnote}{\fnsymbol{footnote}}
\begin{document}

\def\thefootnote{}
\title{\bf \Large Existence of strong solutions for the compressible Ericksen-Leslie model
\author{\large{Xiangao Liu$^{*}$, Lanming Liu$^{*}$, Yihang Hao$^{*}$}\\
{\it \small  School of Mathematic Sciences, Fudan University,
Shanghai, 200433, P.R.China }}  }
 \date{}
 \maketitle
\footnote{ $^{*}$E-mail: xgliu@fudan.edu.cn \   lanmingliu@gmail.com \  10110180022@fudan.edu.cn}

\maketitle
\begin{center}
\begin{minipage}[t]{14cm}{\bf \small {Abstract:}}{\small {
 In this paper, we prove the existence and uniqueness of local strong solutions of the hydrodynamics of nematic liquid crystals system under the initial data satisfying a natural compatibility condition. Also the  global strong solutions of the system with small initial data are obtained.}

{\bf \small {Keywords:}}\rm{  Compressible liquid crystals; Strong
solutions; Vacuum.}

{\bf \small {Mathematics Subject Classification}}\rm{ 35M10, 76N10, 82D30.}}
\end{minipage}
\end{center}


\section {Introduction}
It is well known liquid crystals are states of matter which are
capable of flow and in which the molecular arrangement gives rise to
a preferred direction. By Ericksen-Leslie theory, the compressible
nematic liquid crystals reads  the following system:
\begin{align}
  &\rho_t+\mathrm{div}(\rho u) = 0,\label{md1}\\
  &(\rho u)_t+\mathrm{div}(\rho u\otimes u)+\bigtriangledown p=\mu\bigtriangleup u-\lambda\mathrm{div}(\bigtriangledown d\odot\bigtriangledown d-\frac{1}{2}(\left |\bigtriangledown d\right|^2+F(d))I), \label{dl1} \\
  &d_t+u\cdot \bigtriangledown d=\nu (\bigtriangleup d-f(d)),\label{yj1}
\end{align}
in $\Omega\times(0,T)$, for a bounded smooth domain $\Omega\subset
\mathbb{R}^3$. $\rho(t,x)$ is density, $u(t,x)$ the velocity
field, $d(t,x)$  orientation parameter of the liquid
crystal and  $p(\rho)$ pressure with
\begin{eqnarray}
p=p(\cdot)\in C^1[0,\infty), \qquad p(0)=0.\label{yq1}\label{yq2}
\end{eqnarray}
The viscosity coefficients $\mu,\lambda,\nu$ are positive constants.
The unusual term $\bigtriangledown d\odot\bigtriangledown d$ denotes
the $3\times3$ matrix whose $(i,j)$-th element is given by
$\sum_{k=1}^3\partial_{x_i} d_k\partial_{x_j} d_k$. $I$ is the unite
matrix.
\begin{eqnarray*}
 \ f(d)=\frac{1}{\sigma^2}(|d|^2-1)d\qquad  \mathrm{and} \qquad
 F(d)=\frac{1}{4\sigma^2}(|d|^2-1)^2.
\end{eqnarray*}
We are interested in the initial data
\begin{eqnarray}
\begin{array}{llll}
 \rho(0,x)=\rho_0\geq 0, & u(0,x)=u_0, & d(0,x)=d_0(x),&\forall x\in \Omega,
 \end{array} \label{cz1}
\end{eqnarray}
and the boundary condition
\begin{eqnarray}
u(t,x)=0,\quad d(t,x)=d_0(x),\quad   |d_0(x)|=1, \quad \forall (t,x)\in (0,T)\times\partial\Omega.\label{bz1}
\end{eqnarray}

The first author \cite{liu}  firstly considers the model
(\ref{md1})-(\ref{yq1}) and gives a global existence of finite
energy weak solutions by using Lions's technique (see
\cite{lp2,feireisl1}). The incompressible model of a
similar simplified Ericksen-Leslie model has been studied in the
papers \cite{ll1, ll2,ll3,lc,coutand}. Recently, for the
incompressible Ericksen-Leslie model, Wen-Ding \cite{wen}
have gave the proof of local strong solutions in two and three
dimensions, and Lin-Lin-Wang \cite{ll0} have established the
existence of global (in time) weak solutions on a bounded smooth
domain in two dimensions .

Throughout this paper, we use the following simplified notations
\begin{eqnarray}
\begin{array}{llll}
L^q=L^q(\Omega),&W^{k,q}=W^{k,q}(\Omega),&H^k=W^{k,2}(\Omega),&H^1_0=W^{1,2}_0(\Omega).
\end{array}
\end{eqnarray}
In order to obtain strong solutions, we need $(\rho_0, u_0, d_0)$ satisfies the regularity
\begin{eqnarray}
\rho_0 \in W^{1,6},\qquad u_0 \in H_0^1\cap H^2,\qquad d_0\in H^3,\label{zz1}
\end{eqnarray}
and a compatibility condition
\begin{eqnarray}
\mu\bigtriangleup u_0-\lambda\mathrm{div}(\bigtriangledown
d_0\odot\bigtriangledown d_0-\frac{1}{2}(\left |\bigtriangledown
d_0\right|^2+F(d_0))I)-\nabla p_0=\rho_0^\frac{1}{2} g, \forall x\in \Omega,\label{xr1}
\end{eqnarray}
for some $g \in L^2$.\\
 The compatibility condition is a natural request to insure  $\|\sqrt{\rho}u_t(0)\|_{L^2}$ bounded, and a
compensation to the lack of a positive lower bound of $\rho_0$. It is firstly introduced by Salvi, Str$\breve{a}$skraba \cite{salvi} and the authors
of \cite{kim3} independently.

Our main result is the following theorem:
\begin{theorem}\label{localexistence}
{\bf Part I: (Local existence)} \,\, Under the assumptions of the
regularity condition (\ref{zz1}) and the compatibility condition
(\ref{xr1}), there exists a small $T^{*}\in (0,T)$ such that the
system (\ref{md1})-(\ref{bz1}) has a unique local strong solution
$(\rho,u,d)$ satisfying
\begin{eqnarray}
\begin{array}{ll}
\rho \in C([0,T^{*}]; W^{1,6}), &\rho _t\in C([0,T^{*}]; L^6),\\
u\in C ([0,T^{*}]; H_0^1\cap H^2)\cap L^2(0,T^{*};W^{2,6} ),&d\in C([0,T^{*}]; H^3), \\
 u_t\in L^2(0,T^{*}; H_0^1),& d_t \in C([0,T^{*}]; H^1_0)\cap L^2(0,T;H^2), \\
\sqrt{\rho} u_t \in C([0,T^{*}]; L^2),&d_{tt}\in L^2(0,T^*;L^2).
\end{array}\label{jie1}
\end{eqnarray}
{\bf Part II: (Continuity of initial data)} \,\, Suppose that
$(\widetilde{\rho},\widetilde{u},\widetilde{d})$ is another solution
of (\ref{md1})-(\ref{yq1}) with the following initial boundary
conditions
\begin{eqnarray}
\left\{\begin{array}{lll}
 \widetilde{\rho}(0,x)=\widetilde{\rho}_0\geq 0, &\widetilde{ u}(0,x)=\widetilde{u}_0,
  & \widetilde{ d}(0,x)=\widetilde{d}_0(x),\ \ \forall x\in \Omega,\\
 \widetilde{u}(t,x)=0,&\widetilde{d}(t,x)=d_0(x),
  & \forall (t,x)\in (0,T)\times\partial\Omega,
 \end{array}\right. \nonumber
\end{eqnarray}
then for any $t\in (0,T^*]$, the following quantities tend to zero
when
$(\widetilde{\rho}_0,\widetilde{u}_0,\widetilde{d}_0)\rightarrow(\rho_0,u_0,d_0)$
in $W^{1,6}\times H^2\times H^3$:
\begin{eqnarray*}
&&\|d-\widetilde{d}\|_{H^2}(t),\
\|d-\widetilde{d}\|_{L^2(0,T^*;H^3)},\
\|d_t-\widetilde{d_t}\|_{L^2}(t),\ \|d_t-\widetilde{d_t}\|_{L^2(0,T^*;H^1)},\\
&&\|u-\widetilde{u}\|_{H^1}(t),\ \|u-\widetilde{u}\|_{L^2(0,T^*;H^2)},\
\|\sqrt{\rho}(u_t-\widetilde{u_t})\|_{L^2(0,T^*;L^2)},\
\|\rho-\widetilde{\rho}\|_{L^6}(t).
\end{eqnarray*}
\end{theorem}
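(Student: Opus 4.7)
The plan is to follow the standard strategy for compressible Navier-Stokes with vacuum --- regularize the initial density, set up a linear iteration, derive a priori estimates uniform in the regularization parameter by means of the compatibility condition (\ref{xr1}), and then pass to the limit --- while suitably coupling the parabolic estimates for the director $d$. Concretely, replace $\rho_0$ by $\rho_0^\delta := \rho_0 + \delta$ ($\delta>0$) so that the density is strictly positive, and obtain by standard elliptic theory a $g^\delta\in L^2$ satisfying the $\delta$-analogue of (\ref{xr1}). Given $(v^k, d^k)$ in the regularity class of (\ref{jie1}), define $\rho^{k+1}$ from the continuity equation by characteristics, solve the linear momentum equation
\[
\rho^{k+1}u^{k+1}_t + \rho^{k+1}(v^k\!\cdot\!\nabla)u^{k+1} - \mu\Delta u^{k+1} = -\nabla p(\rho^{k+1}) - \lambda\,\mathrm{div}\bigl(\nabla d^k\odot\nabla d^k - \tfrac12(|\nabla d^k|^2 + F(d^k))I\bigr),
\]
and the linear parabolic equation $d^{k+1}_t + v^k\!\cdot\!\nabla d^{k+1} - \nu\Delta d^{k+1} = -\nu f(d^k)$ with the prescribed boundary conditions; classical linear theory places each iterate in the desired class.

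The heart of the argument is an a priori estimate uniform in both $k$ and $\delta$ on a short time $T^*$. I would run the hierarchy: (i) characteristics for $\|\rho\|_{L^\infty}$ and $\|\nabla\rho\|_{L^6}$; (ii) test the momentum equation by $u_t$, then differentiate in $t$ and test by $u_{tt}$, to control $\|\sqrt{\rho}u_t\|_{L^2}$ and $\|\nabla u_t\|_{L^2}$; (iii) treat the momentum equation as the Poisson problem $-\mu\Delta u = F$ with $F$ the remaining terms, to upgrade to $H^2$ and $W^{2,6}$ bounds on $u$; (iv) parabolic $L^2$ theory for $d$ up to $H^3$, with the polynomial nonlinearities $|\nabla d|^2$ and $f(d)$ handled via $H^2\hookrightarrow L^\infty$. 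The critical point in step (ii) is obtaining the initial datum $\|\sqrt{\rho_0^\delta}\,u_t(0)\|_{L^2}$, which cannot come from an $L^\infty$ lower bound on $\rho$ that is not available: this is precisely why (\ref{xr1}) is imposed, giving $\sqrt{\rho_0^\delta}\,u_t(0) = g^\delta$ directly. A bootstrap fixes $T^*$ depending only on the initial-data norms and $\|g\|_{L^2}$. The iterates are then Cauchy in a weaker norm ($L^2$ for $u,\rho$, $H^1$ for $d$), producing a $\delta$-solution; passing $\delta\to 0$ with weak-$*$ convergence at top order and strong convergence at lower orders yields the solution claimed in (\ref{jie1}), and uniqueness follows from the same difference argument.

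For Part II, subtract the equations for the two solutions and propagate the composite energy
\[
E(t) := \|\rho-\widetilde\rho\|_{L^6}^2 + \|\sqrt{\rho}(u-\widetilde u)\|_{L^2}^2 + \|\nabla(u-\widetilde u)\|_{L^2}^2 + \|d-\widetilde d\|_{H^2}^2 + \|d_t-\widetilde d_t\|_{L^2}^2,
\]
using the uniform bounds from Part I on both solutions to control all coefficient and convective error terms and to absorb the top-order contributions into the dissipation; Gronwall then yields $E(t) \lesssim E(0)$, sending each quantity listed in Part II to zero as the data converge. The principal difficulty is running steps (ii)--(iii) in the absence of a positive lower bound on $\rho$: only $\sqrt{\rho}u_t$ is directly controlled, so the bootstrap between the $u_t$-bound and elliptic regularity must be handled carefully, and the Ericksen coupling $\mathrm{div}(\nabla d\odot\nabla d)$ injects third-order $d$-derivatives into the momentum equation, forcing the full $H^3$ bound on $d$ to be carried through the iteration and making the combined estimate subtler than the pure compressible Navier-Stokes case.
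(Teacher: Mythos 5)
Your overall strategy --- regularize the density by $\rho_0^\delta=\rho_0+\delta$, set up a linearized iteration, close estimates uniform in $\delta$ and in the iteration index on a short time interval by using the compatibility condition to bound $\|\sqrt{\rho}u_t(0)\|_{L^2}$, show the iterates are Cauchy in a low-order norm, pass to the limit, and run an energy/Gronwall argument for uniqueness and continuous dependence --- is exactly the paper's (Sections \ref{localex}--\ref{unique}). But one step as you state it would fail. In your step (ii) you differentiate the momentum equation in $t$ and test by $u_{tt}$. For fixed $\delta$ this is formally legal, but the resulting estimate cannot be made uniform in $\delta$: the terms $\int\rho_t u_t\cdot u_{tt}$ and those coming from $(\rho v\cdot\nabla u)_t$ can only be absorbed into $\int\rho|u_{tt}|^2$ at the price of a factor $\rho^{-1}$, which degenerates on the vacuum set as $\delta\to 0$. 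The paper (Lemma \ref{lm3}, inequality (\ref{u1})) instead tests the time-differentiated equation by $u_t$, so the left-hand side is $\frac{\mathrm{d}}{\mathrm{d}t}\int\rho|u_t|^2+\mu\int|\bigtriangledown u_t|^2$ and the dangerous term becomes $\int\rho_t|u_t|^2=2\int\rho v\cdot(\bigtriangledown u_t)u_t$ after integrating by parts against the continuity equation, controllable by $\|\rho\|_{L^\infty}^{1/2}\|v\|_{L^\infty}\|\sqrt{\rho}u_t\|_{L^2}\|\bigtriangledown u_t\|_{L^2}$ with no lower bound on $\rho$. Since, as you yourself observe, only $\sqrt{\rho}u_t$ is available, you must use this version of the estimate.

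Two smaller points. First, preserving compatibility under regularization: if you keep $u_0$ fixed and define $g^\delta:=(\rho_0^\delta)^{-1/2}\bigl(\mu\bigtriangleup u_0-\lambda\,\mathrm{div}(\cdots)-\bigtriangledown p(\rho_0^\delta)\bigr)$, the pressure correction $(\rho_0^\delta)^{-1/2}\bigl(\bigtriangledown p(\rho_0)-\bigtriangledown p(\rho_0^\delta)\bigr)$ is not obviously bounded in $L^2$ uniformly in $\delta$ when $p$ is only $C^1$; the paper avoids this by fixing $g$ and redefining $u_0^\delta$ through the elliptic problem (\ref{xr2}), so that $u_0^\delta\rightarrow u_0$ in $H_0^1\cap H^2$ while the compatibility datum stays controlled by $\|g\|_{L^2}$. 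Second, the elastic stress satisfies $\mathrm{div}(\bigtriangledown d\odot\bigtriangledown d-\frac12(|\bigtriangledown d|^2+F(d))I)=(\bigtriangledown d)^T(\bigtriangleup d-f(d))$, which is how it appears in the linearized momentum equation (\ref{dl2}); so no third derivatives of $d$ actually enter the momentum equation, and the $H^3$ bound on $d$ is needed only to place $(\bigtriangledown d)^T\bigtriangleup d$ in $L^6$ for the $W^{2,6}$ estimate on $u$. With these corrections your outline coincides with the paper's proof; the remaining differences (iterating before or after sending $\delta\to 0$, and bundling the continuous-dependence estimates into one Gronwall rather than the paper's staged estimates (\ref{unq1e})--(\ref{unq1i})) are matters of bookkeeping.
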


In general, as Navier-Stokes equations we have not the  global
existence with large initial data. However, a blow-up criterion is obtained
in our  article \cite{liu1} and for small initial data, we have the following global existence.

\begin{theorem}($\mathbf{Global\ existence\ with\ small\ initial \ data}$)\label{globalex}
Let $\alpha$ be a nonnegative constant and $m$ a constant unite
vector in $\mathbb{R}^3$. Then there exists a positive constant
$\theta_0$ small such that if the initial data satisfies
\begin{eqnarray}
\max\{\|\rho_0-\alpha\|_{W^{1,6}},\|u_0\|_{H^2},\|d_0-m\|_{H^3},
\|g\|_{L^2}^2\}\leq \theta,\label{globala}
\end{eqnarray} for all $\theta\in(0,\theta_0]$,
then the system(\ref{md1})-(\ref{bz1}) has a unique global strong
solution.
\end{theorem}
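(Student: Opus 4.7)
The plan is to combine the local existence and uniqueness result of Theorem \ref{localexistence} with global-in-time a priori estimates produced by a bootstrap argument that exploits the smallness of the initial perturbation from the equilibrium $(\alpha,0,m)$. Since $m$ is a constant unit vector, one checks that $f(m)=0$ and $F(m)=0$, so $(\rho,u,d)\equiv(\alpha,0,m)$ is a steady state of (\ref{md1})--(\ref{yj1}), and the natural unknowns are the perturbations $\sigma=\rho-\alpha$ and $n=d-m$. The compatibility condition (\ref{xr1}) ensures $\|\sqrt{\rho}u_t(0)\|_{L^2}\le\|g\|_{L^2}\le\theta^{1/2}$ is also small, which is essential for controlling $u_t$ at $t=0$.

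I would introduce a ``master'' functional
\[
\Phi(T)=\sup_{0\le t\le T}\!\Bigl(\|\sigma\|_{W^{1,6}}^2+\|u\|_{H^2}^2+\|n\|_{H^3}^2+\|\sqrt{\rho}\,u_t\|_{L^2}^2+\|d_t\|_{H^1}^2\Bigr)+\int_0^T\!\Bigl(\|\nabla u\|_{H^2}^2+\|u_t\|_{H^1}^2+\|n\|_{H^3}^2+\|d_t\|_{H^2}^2+\|d_{tt}\|_{L^2}^2\Bigr)dt,
\]
and prove the closed inequality $\Phi(T)\le C_0\theta+C_1\Phi(T)^{3/2}$ on any interval $[0,T]$ where the local strong solution exists. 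The estimates are produced in the standard order: first the basic energy identity obtained by testing (\ref{dl1}) with $u$ and (\ref{yj1}) with $-\Delta n+f(d)$ (using the cancellation between $\lambda\,\mathrm{div}(\nabla d\odot\nabla d-\tfrac12(|\nabla d|^2+F(d))I)\cdot u$ and the transport structure of the $d$-equation); next, differentiating (\ref{dl1}) in time and testing with $u_t$ to obtain control of $\|\sqrt\rho u_t\|_{L^2}$ and $\|\nabla u_t\|_{L^2(L^2)}$; then, by the Lam\'e elliptic estimate for $-\mu\Delta u+\nabla p=-\rho u_t-\rho u\cdot\nabla u-\lambda\,\mathrm{div}(\cdots)$, upgrading $u$ to $H^2$ and $W^{2,6}$; for $d$, applying $H^2$-elliptic regularity to $\nu\Delta n=d_t+u\cdot\nabla d+\nu f(d)$ and differentiating the $d$-equation in time; finally, the $W^{1,6}$ bound for $\sigma$ via the continuity equation and Gronwall. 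In every step the nonlinearities (quadratic or cubic in the perturbations) carry at least one extra factor of $\Phi(T)^{1/2}$, which is why the right-hand side gains the superlinear power $\Phi(T)^{3/2}$.

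Once this closed inequality is proved, a standard continuity argument closes the bootstrap: if $\theta_0$ is chosen so that $4C_0C_1^2\theta_0^{1/2}<1$ then the set $\{T\in[0,\infty):\Phi(T)\le 2C_0\theta\}$ is nonempty (by local existence and continuity of $\Phi$ in $T$), open, and closed, hence equal to $[0,\infty)$. Combined with the blow-up criterion of \cite{liu1}, which asserts that the strong solution can be extended as long as the controlled quantities (in particular $\|\nabla u\|_{L^1(L^\infty)}$ and $\|\nabla d\|_{L^2(L^\infty)}$-type norms) stay bounded, the uniform estimate $\Phi(T)\le 2C_0\theta$ rules out finite-time blow-up and yields the global strong solution.

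The main obstacle is closing the estimate for the density perturbation in $W^{1,6}$ together with the estimate for $u$ in $H^2$ without a positive lower bound on $\rho$: one cannot recover $u_t$ in $L^2$ directly, only $\sqrt\rho u_t$, so the elliptic estimate for $u$ must be driven by $\|\sqrt\rho u_t\|_{L^2}$ and the nonlinear convective and liquid-crystal terms in a way that is linear in the highest norms and picks up a small factor $\Phi(T)^{1/2}$ from the smallness of the perturbation. Handling the stretching term $\lambda\,\mathrm{div}(\nabla d\odot\nabla d)$ is the other delicate point, but since $|\nabla d|=|\nabla n|$ and $n$ is small in $H^3$, it too produces only superlinear contributions once one integrates by parts carefully against $u$ or $u_t$.
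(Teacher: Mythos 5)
Your strategy---perturbing off the steady state $(\alpha,0,m)$, proving a closed nonlinear inequality $\Phi(T)\le C_0\theta+C_1\Phi(T)^{3/2}$, and combining a continuity/bootstrap argument with the blow-up criterion of \cite{liu1}---is a legitimate and standard route, but it is genuinely different from the paper's. The paper never writes a priori estimates for the nonlinear system: it returns to the linearized problem (\ref{md2})--(\ref{dl2}) and observes that when the data and the inputs $(v,n)$ are small of size $\theta$, the conclusions of Lemmas \ref{lm1}--\ref{lm3} hold on the whole interval $[0,T]$ rather than only on $T_3=\min\{c_2^{-22},T\}$, with outputs of size $\theta^{1/6}$ (estimates (\ref{globalex1c})--(\ref{globalex1e})); smallness makes the iteration self-consistent ($C\theta_3\le\widetilde{C}$), so every iterate exists on $[0,T]$, and the difference estimate (\ref{globalex1f})--(\ref{globalex1h}) contracts on all of $[0,T]$ because the Gronwall exponent is of order $1+\eta^{-1}\theta^{1/2}(1+T)$. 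The paper's route is economical (the linear lemmas are reused verbatim, and no continuation argument or external blow-up criterion is needed); yours is more self-contained conceptually and closer to the modern small-energy literature, at the cost of redoing all the energy estimates nonlinearly. Two caveats you should address. First, your bootstrap set must be $\{\tau\in[0,T]:\Phi(\tau)\le 2C_0\theta\}$, not $[0,\infty)$: from your functional the transport estimate only gives $\int_0^t\|\nabla u\|_{W^{1,6}}\,d\tau\le t^{1/2}\Phi(t)^{1/2}$, so the $W^{1,6}$ bound on $\rho$ (and hence $C_0$, $C_1$, $\theta_0$) is $T$-dependent and the inequality cannot be closed up to $t=\infty$ without decay estimates; this matches the paper, whose threshold also depends on $T$, but your phrasing overclaims. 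Second, the continuation step requires re-verifying the compatibility condition (\ref{xr1}) at the new initial time; this works because the momentum equation identifies $g$ at time $T$ with $\sqrt{\rho}\,(u_t+u\cdot\nabla u)(T)$, which your $\Phi$ controls, but it needs to be said explicitly (or one must check that the quantities appearing in the criterion of \cite{liu1} are precisely those dominated by $\Phi$).
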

\begin{rk}
In this paper, we only consider the case $\alpha=0$, because the
other case ($\alpha>0$) can be induced to the problem with a positive initial data of
density.
\end{rk}

The methods to prove the Theorem \ref{localexistence} and Theorem \ref{globalex} are
the successive approximation in Sobolev spaces (see \cite{kim1, kim2}). We generalize this method to two variables, which is based on the following careful observation on coupling terms of $d$ and $u$:\\
(1)  $\|d\|_{H^2}^2$ can be deduced by $\int_0^t \|u\|_{H^2}^2\mathrm{d}\tau$ and $\|u\|_{H^1}^2$;\\
(2) $\|u\|_{H^2}^2$ can be derived by $\int_0^t \|d\|_{H^2}^2\mathrm{d}\tau$.\\
Since the initial density has  vacuum,  the movement equation
(\ref{dl1}) becomes a degenerate parabolic-elliptic couples system.
To overcome this difficulty, as usual, the technique is to approximate the
nonnegative initial data of density by a positive initial data. For a special linear equations of the
system (\ref{md1})-(\ref{yj1}), we prove not only the local
existence of strong solutions  with large initial data but also the
global existence of strong solutions with small initial data. Employing energy law and higher energy inequalities, we can prove both the uniqueness and the continuous dependence on the initial data.

This paper is written as follows. In section \ref{localex}, after establishing a
linear problem of the nonlinear problem (\ref{md1})-(\ref{yj1}), we
prove local existence of a strong solution to the special linear
problem with a positive initial data of density. And we also
establish some uniform a prior estimates, which imply the existence
of a local strong solution to the linear problem when the initial
data of density allows vacuum in section \ref{localexlinear}. In section \ref{It}, after constructing a sequence of approximate solutions, a strong solution
of the nonlinear problem (\ref{md1})-(\ref{yj1}) is obtained. In
section \ref{unique}, the uniqueness and continuous dependence on initial data
are proved.

\section{A linear problem}\label{localex}

\subsection{ Linearization }
At the beginning, we linearize the equations (\ref{md1})-(\ref{yj1})
as following:
\begin{eqnarray}
  &&\rho_t+\mathrm{div}(\rho v) = 0,\label{md2}\\
  &&d_t+v\cdot \bigtriangledown d=\nu (\bigtriangleup d-\frac{1}{\sigma^2}(n+m)\cdot(d-m)n),\label{yj2}\\
  &&\rho u_t+\rho v\cdot u+\bigtriangledown p(\rho)=\mu\bigtriangleup u-\lambda(\bigtriangledown d)^T(\bigtriangleup d-f(d)),\label{dl2} \label{dl22}
\end{eqnarray}
where $m$ is a constant unite vector and $(v,n)$ satisfies the following
regularity
\begin{eqnarray}
\begin{array}{cc}
v\in C([0,T]; H_0^1\cap H^2)\cap L^2(0,T; W^{2,6}),& v_t\in  L^2(0,T; H_0^1),\\
n\in C([0,T];H^2)\cap L^2(0,T; H^3),& n_t\in C([0,T]; H_0^1)\cap L^2(0,T; H^2).
\end{array}\label{as1}
\end{eqnarray}
Assume further that
\begin{eqnarray}
\left\{\begin{array}{ccc}\label{as2}
v(0,x)=u_0(x),&n(0,x)=d_0(x),&  \forall x\in \Omega,\\
v(t,x)=0,&n(t,x)=d_0(x),&  \forall (t,x)\in (0,T)\times \partial\Omega.
\end{array}\right.
\end{eqnarray}

\subsection{The existence of  approximate solutions }

For each $\delta\in(0,1)$, let $u_0^{\delta}$ be the solution to the boundary value problem:
\begin{eqnarray}
\left\{\begin{array}{l}
\mu\bigtriangleup u_0^\delta-\lambda\mathrm{div}(\bigtriangledown
 d_0\odot\bigtriangledown d_0-\frac{1}{2}(\left |\bigtriangledown
 d_0\right|^2+F(d_0))I)-p(\rho_0^\delta)=(\rho_0^\delta)^\frac{1}{2} g,\\
u_0^{\delta}=0, \forall x \in \partial \Omega,
\end{array}\right.\label{xr2}
\end{eqnarray}
where $\rho_0^\delta=\rho_0+\delta$. Then   $u_0^{\delta}\rightarrow
u_0$ in $H_0^1\cap H^2$ as $\delta \rightarrow 0$.

For the linear equations (\ref{md2})-(\ref{dl2}), we have the following theorem.

\begin{thm}\label{j1}
Assume $\rho_0,\  u_0$ and $ d_0$  satisfy the regularity
(\ref{zz1}) and $(v, n)$ satisfies the above conditions (\ref{as1})
and (\ref{as2}). Then there exists a unique strong solution $(\rho,
u,d)$ of the linear equations (\ref{md2})-(\ref{dl2}) with the
initial data $(\rho_0^\delta, u_0^\delta, d_0)$ and the boundary
condition (\ref{bz1}) such that
\begin{eqnarray}
\begin{array}{ll}
\rho\in C([0,T]; W^{1,6}),& \rho_t\in C([0,T]; L^6),\\
u\in C([0,T]; H_0^1\cap H^2)\cap L^2(0,T; W^{2,6}),& u_t\in C([0,T]; L^2)\cap L^2(0,T; H_0^1),\\
d\in C([0,T];H^2)\cap L^2(0,T; H^3),& d_t\in C([0,T]; H_0^1)\cap L^2(0,T; H^2),\\
u_{tt}\in L^2(0,T;H^{-1}),&d_{tt}\in L^2(0,T;L^{2}).
\end{array}\label{zz2}
\end{eqnarray}
\end{thm}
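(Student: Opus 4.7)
The plan is to solve the three equations sequentially, exploiting the fact that the linearization decouples them: the continuity equation (\ref{md2}) depends only on $v$, the orientation equation (\ref{yj2}) depends on $v$ and $n$, and the momentum equation (\ref{dl2}) depends on $\rho$, $v$, and $d$. Crucially, the approximate datum $\rho_0^\delta = \rho_0 + \delta$ is strictly positive, so by the transport structure $\rho(t,\cdot)$ remains bounded below by a positive constant on $[0,T]$, and the momentum equation becomes a non-degenerate linear parabolic system for $u$.

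First I would solve the transport equation (\ref{md2}) by the method of characteristics. Since $v\in L^2(0,T;W^{2,6})\cap C([0,T];H^2)$, it is Lipschitz in the spatial variable, so the flow is well-defined and propagates the $W^{1,6}$ regularity of $\rho_0^\delta$. The bound $\rho(t,x)\geq \delta\exp\bigl(-\int_0^t\|\mathrm{div}\,v\|_{L^\infty}\,ds\bigr)>0$ follows from the representation along characteristics, and $\rho_t=-v\cdot\nabla\rho-\rho\,\mathrm{div}\,v$ lies in $C([0,T];L^6)$ by the product rule and Sobolev embedding.

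Next I would solve the parabolic equation (\ref{yj2}) for $d$ with inhomogeneous Dirichlet data $d_0|_{\partial\Omega}$. After subtracting a harmonic extension of $d_0$, a Faedo--Galerkin scheme in the eigenbasis of the Dirichlet Laplacian gives a weak solution; testing the equation successively against $-\Delta d$, $-\Delta d_t$, and $d_{tt}$, and using (\ref{as1}) to control the lower-order coupling term $(n+m)\cdot(d-m)n$ via Sobolev multiplication, delivers $d\in C([0,T];H^2)\cap L^2(0,T;H^3)$, $d_t\in C([0,T];H_0^1)\cap L^2(0,T;H^2)$, and $d_{tt}\in L^2(0,T;L^2)$. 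The value $d_t(0)\in H_0^1$ is read off from the equation at $t=0$ using the compatibility of $d_0$ with $n(0)=d_0$ from (\ref{as2}).

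With $\rho$ and $d$ in hand, (\ref{dl2}) reduces to a linear uniformly parabolic system $\rho u_t-\mu\Delta u=F$ with right-hand side $F=-\rho v\cdot\nabla u-\nabla p(\rho)-\lambda(\nabla d)^T(\Delta d-f(d))$ already in $L^2(0,T;L^2)$; Galerkin approximation together with elliptic regularity applied to the identity $-\mu\Delta u = F - \rho u_t$ gives the $H^2$ and $W^{2,6}$ bounds. Differentiating in time and testing with $u_{tt}$ yields the bound for $u_t$; this last step is where the compatibility condition (\ref{xr2}) is essential, since it is exactly what controls $\|\sqrt{\rho_0^\delta}\,u_t(0)\|_{L^2}$ uniformly in the Galerkin parameter, and without it one cannot close the estimate giving $u_t\in C([0,T];L^2)$ up to $t=0$. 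Uniqueness is routine: the difference of two solutions satisfies a linear homogeneous system with zero initial data, and the same energy identities force it to vanish. I expect the main technical obstacle to be this careful handling of the initial layer in the $u_t$ estimate, together with the bookkeeping needed to trace the regularity of the coupling term $(\nabla d)^T(\Delta d-f(d))$ through the $W^{2,6}$ estimate on $u$.
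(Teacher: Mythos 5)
Your proposal follows essentially the same route as the paper: the linear system decouples, so the paper likewise solves (\ref{md2}) by characteristics (Lemma \ref{lm1}), then (\ref{yj2}) by Galerkin (Lemma \ref{lm2}), and finally divides (\ref{dl2}) by the strictly positive density $\rho\geq\underline{\delta}>0$ inherited from $\rho_0^\delta=\rho_0+\delta$ and applies Galerkin again (Lemma \ref{lm3}), with the compatibility relation (\ref{xr2}) entering exactly where you place it, in controlling $\limsup_{\tau\to 0+}\int_\Omega\rho|u_t|^2\,\mathrm{d}x$. The approach and the key estimates are the same, so the proposal is correct.
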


We will prove the theorem  by the following three lemmas. Firstly,
suppose the constants $c_0,\ c_1$ and $c_2$ satisfying
\begin{eqnarray}
c_0&> &1+\|\rho_0\|_{W^{1,6}}+\|d_0\|_{H^3}+\|u_0\|_{H^2}+\|g\|_{L^2},\label{cs0}\\
c_1&>& \sup_{0\leq t\leq T}(\|v\|_{H^1_0}+\|n\|_{H^1}+\|n_t\|_{H^1_0})+\int_0^T(\|\bigtriangledown v_t\|^2_{L^2}+\|v\|^2_{W^{2,6}}\nonumber\\
&&+\|\bigtriangledown^2 n_t\|^2_{L^2}+\|n\|^2_{H^3})\mathrm{d}t,\label{cs1}\\
c_2&>&\sup_{0\leq t\leq T}(\|\bigtriangledown ^2 v\|_{L^2}+\|\bigtriangledown ^2 n\|_{L^2}),\\
c_2&>&c_1>c_0>1.\label{cs2}
\end{eqnarray}

\begin{lema}\label{lm1}
Assume $\rho_0$ and $v$ satisfy the regularities (\ref{zz1}) and
(\ref{as1}) respectively. Then the problem (\ref{md2}) and
(\ref{cz1}) has a global unique strong solution such that
\begin{eqnarray}
\begin{array}{cc}
\rho\in C([0,T]; W^{1,6}),&\rho_t\in C ([0,T]; L^6).
\end{array}\label{mdc}\label{md4a}
\end{eqnarray}
Moreover, if $v, n$ satisfy (\ref{cs0})-(\ref{cs2}), then there is a
$T_1=\min \{c_1^{-1}, T\}$ such that $\forall t\in [0,T_1],$
\begin{eqnarray}
\begin{array}{cc}
\|\rho\|_{W^{1,6}}(t)\leq Cc_0 ,& \|\rho_t\|_{L^6}(t)\leq
Cc_0c_2.
\end{array}\label{md4c}
\end{eqnarray}
In particular,
\begin{eqnarray}
\begin{array}{ccc}
\|p\|_{L^6}(t)\leq CM(c_0),& \|\bigtriangledown p\|_{L^6}(t)\leq
CM(c_0)c_0,& \| p_t\|_{L^6}(t)\leq CM(c_0)c_0c_2,
\end{array}\label{yq3}
\end{eqnarray}
where the constant $M(c_0)$ is defined by (\ref{yqcs}).
\end{lema}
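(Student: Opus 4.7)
The plan is to treat (\ref{md2}) as a linear transport equation in $\rho$ with a given divergence and drift coming from $v$, so all the work is really the standard theory for the continuity equation plus a careful tracking of the constants $c_0,c_1,c_2$ and the choice of $T_1$.

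First I would establish existence and regularity. Since $v\in L^2(0,T;W^{2,6})$ and $W^{2,6}\hookrightarrow W^{1,\infty}$ in three dimensions, the flow $X(t,x)$ defined by $\dot X=v(t,X)$, $X(0,x)=x$ is a well-defined $C^1$ diffeomorphism of $\overline\Omega$ (note $v=0$ on $\partial\Omega$, so $\Omega$ is invariant). Then $\rho$ is given explicitly by
\begin{equation*}
\rho(t,X(t,x))=\rho_0(x)\exp\!\Bigl(-\int_0^t\mathrm{div}\,v(s,X(s,x))\,\mathrm{d}s\Bigr),
\end{equation*}
and differentiating this formula in $x$ (using $\nabla v\in L^2(0,T;L^\infty)$ and $\nabla^2 v\in L^2(0,T;L^6)$) one gets $\rho\in C([0,T];W^{1,6})$. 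The time derivative $\rho_t=-v\cdot\nabla\rho-\rho\,\mathrm{div}\,v$ then lies in $C([0,T];L^6)$. Alternatively one may mollify $v$ and $\rho_0$, solve the resulting smooth problem, and pass to the limit using the DiPerna–Lions stability argument; I would favour the Lagrangian representation for shortness.

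Next I would derive the uniform estimates (\ref{md4c}). For the $L^\infty$ bound, the Lagrangian formula gives
\begin{equation*}
\|\rho(t)\|_{L^\infty}\leq \|\rho_0^\delta\|_{L^\infty}\exp\!\Bigl(\int_0^t\|\mathrm{div}\,v\|_{L^\infty}\,\mathrm{d}s\Bigr),
\end{equation*}
and $\int_0^t\|\mathrm{div}\,v\|_{L^\infty}\mathrm{d}s\leq C\int_0^t\|v\|_{W^{2,6}}\mathrm{d}s\leq C\sqrt{t}\,\sqrt{c_1}\leq C$ for $t\leq T_1=c_1^{-1}$. Applying $\nabla$ to (\ref{md2}) and testing against $|\nabla\rho|^4\nabla\rho$ gives
\begin{equation*}
\frac{\mathrm{d}}{\mathrm{d}t}\|\nabla\rho\|_{L^6}\leq C\|\nabla v\|_{L^\infty}\|\nabla\rho\|_{L^6}+C\|\rho\|_{L^\infty}\|\nabla^2 v\|_{L^6},
\end{equation*}
and Gronwall combined with the same $\int_0^{T_1}\|v\|_{W^{2,6}}\mathrm{d}s\leq 1$ estimate yields $\|\rho\|_{W^{1,6}}(t)\leq Cc_0$. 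The $L^6$-bound on $\rho_t$ then follows from $\rho_t=-v\cdot\nabla\rho-\rho\,\mathrm{div}\,v$, together with $\|v\|_{L^\infty}+\|\nabla v\|_{L^6}\leq C\|\nabla^2 v\|_{L^2}\leq Cc_2$ by Sobolev embedding, giving $\|\rho_t\|_{L^6}\leq C\|v\|_{L^\infty}\|\nabla\rho\|_{L^6}+C\|\rho\|_{L^\infty}\|\nabla v\|_{L^6}\leq Cc_0c_2$.

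Finally, the pressure estimates (\ref{yq3}) are immediate from the chain rule: $p(\rho)$, $\nabla p=p'(\rho)\nabla\rho$, $p_t=p'(\rho)\rho_t$, and since $\rho$ is valued in $[0,Cc_0]$ (by the $W^{1,6}\hookrightarrow L^\infty$ embedding and the previous bound) we set $M(c_0):=\sup_{s\in[0,Cc_0]}(|p(s)|+|p'(s)|)$, after which the three inequalities follow by the $L^6$-bounds on $\rho$, $\nabla\rho$, $\rho_t$ already obtained. The main technical point is really the choice $T_1=\min\{c_1^{-1},T\}$: it is exactly what makes $\int_0^{T_1}\|v\|_{W^{2,6}}\mathrm{d}s\leq\sqrt{T_1\,c_1}\leq 1$, converting the hypothesis $\int_0^T\|v\|_{W^{2,6}}^2\leq c_1$ into a harmless absolute constant in the Gronwall exponent; the rest of the argument is standard linear transport theory.
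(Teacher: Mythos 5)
Your proposal follows essentially the same route as the paper: the explicit Lagrangian (characteristics) representation of $\rho$ gives existence, uniqueness and the $W^{1,6}$ bound via $\|\rho\|_{W^{1,6}}(t)\leq \|\rho_0\|_{W^{1,6}}\exp\bigl(\int_0^t\|\bigtriangledown v\|_{W^{1,6}}\,\mathrm{d}\tau\bigr)$, with the choice $T_1=\min\{c_1^{-1},T\}$ turning the exponent into an absolute constant by Cauchy--Schwarz, exactly as in the paper; the $\rho_t$ and pressure estimates then follow from the equation and the definition of $M(c_0)$ in the same way. The only cosmetic difference is your optional energy estimate against $|\nabla\rho|^4\nabla\rho$, which the paper does not need since it reads the $W^{1,6}$ bound directly off the characteristics formula.
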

\begin{rk}
The above lemma is also true to the problem (\ref{md2}) with the initial data $\rho_0^\delta$ and the estimates \ref{md4c}-\ref{yq3} also hold for all small $\delta$.
\end{rk}
\begin{proof}
 Now let's start to prove Lemma \ref{lm1}. The existence is obvious, based on the classical method of characteristics (see \cite{kim1}).
 So
\begin{eqnarray}
\rho(t,x)=\rho_0(y(0,t,x))\exp(-\int_0^t \mathrm{div} v(\tau, y(\tau, t,x))\mathrm{d}\tau),\label{mdb}
\end{eqnarray}
where $y(\tau,t,x)(\in C([0,T]\times [0,T]\times
\overline{\Omega}))$ is a solution of the initial value problem:
\begin{eqnarray}
\left\{\begin{array}{l}
\frac{\partial}{\partial \tau}y(\tau,t,x)=v(\tau,y(\tau,t,x)), \qquad 0\leq \tau \leq T,\nonumber\\
y(t,t,x)=x, \qquad\qquad\qquad 0\leq t \leq T,\ x\in \overline{\Omega}.
\end{array}\right.\nonumber
\end{eqnarray}
Moreover, we have
\begin{eqnarray}
\|\rho\|_{W^{1,6}}(t)\leq \|\rho_0\|_{W^{1,6}}\exp(\int_0^t \|\bigtriangledown v\|_{W^{1,6}}(\tau)\mathrm{d}\tau),\qquad \forall t\in[0,T
].\label{mdgj1a}
\end{eqnarray}
So that for all $t\in [0,T_1],$
\begin{eqnarray}
&\|\rho\|_{W^{1,6}}(t)&\leq \|\rho_0\|_{W^{1,6}}\exp(\int_0^t \|\bigtriangledown v\|_{W^{1,6}}(\tau)\mathrm{d}\tau)\nonumber\\
&& \leq c_0\exp(t^\frac{1}{2}(\int_0^t \|\bigtriangledown v\|^2_{W^{1,6}}(\tau)\mathrm{d}\tau)^\frac{1}{2})\nonumber\\
&&\leq ec_0.\label{md4ba}
\end{eqnarray}
Because of $\|\rho\|_{L^\infty}\leq
\widetilde{C}\|\rho\|_{W^{1,6}}$, set
\begin{eqnarray}
M(c_0)=\sup_{0\leq \cdot\leq
\widetilde{C}ec_0}(1+p(\cdot)+p'(\cdot)).\label{yqcs}
\end{eqnarray}
We can obtain for all $t\in [0,T_1],$
\begin{eqnarray}
\|\rho_t\|_{L^6}(t)&=&\|-v\cdot \bigtriangledown \rho-\rho \mathrm{div}v\|_{L^6}\nonumber\\
&\leq& \|v\|_{L^\infty}\|\bigtriangledown \rho\|_{L^6}+\|\rho\|_{L^\infty}\|\bigtriangledown v\|_{L^6}\nonumber\\
&\leq&\|v\|_{H^2}\| \rho\|_{W^{1,6}}+\|\rho\|_{W^{1,6}}\| v\|_{H^2}\nonumber\\
&\leq& Cc_0c_2.\nonumber
\end{eqnarray}
Similarly,  for all $t\in [0,T_1],$
\begin{eqnarray}
\begin{array}{ll}
\|p\|_{L^6}(t)\leq CM(c_0),&
\|\bigtriangledown p\|_{L^6}(t) \leq CM(c_0)c_0,\\
\| p_t\|_{L^6}(t)\leq CM(c_0)c_2c_0.&
\end{array}\nonumber
\end{eqnarray}

\end{proof}

\begin{lema}\label{lm2}
Under the hypotheses of Theorem \ref{j1}, then the equation
(\ref{yj2}) with the initial boundary conditions
(\ref{cz1})-(\ref{bz1}) has a global unique strong solution $d$
satisfying
(\ref{zz2}).\\
Moreover, if $v, n$ satisfy  (\ref{cs0})-(\ref{cs2}), then there is a
$T_3=\min \{c_2^{-22}, T\}$ such that
\begin{eqnarray}
&&\sup_{0\leq t\leq T_3}(\|d\|_{H^1}+c_1^{-2}\|\bigtriangledown^2 d\|_{L^2}+c_0^\frac{3}{2}\|d_t\|_{H^1}+c_2^{-2}c_1^{-1}\|\bigtriangledown d\|_{H^2})\nonumber\\
&&+\int_0^{T_3} \|d_t\|_{H^2}^2+c_0^3\|d\|^2_{H^3}\mathrm{d}t\leq Cc_0^3 . \label{lm2a}
\end{eqnarray}
\end{lema}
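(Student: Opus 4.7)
I would prove Lemma \ref{lm2} in two stages: first establish existence, uniqueness, and the base regularity \eqref{zz2}, and then derive the refined quantitative estimate \eqref{lm2a} through a hierarchy of energy estimates tuned to the choice $T_3 = \min\{c_2^{-22},T\}$.

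For existence, observe that under \eqref{as1} equation \eqref{yj2} is a linear parabolic problem in $d$ of the form $d_t - \nu \Delta d = R(v,n,d)$, where the right-hand side depends linearly on $d$ with coefficients controlled by $(v,n)$. Introducing $w = d - d_0$ converts the problem to one with zero initial and Dirichlet data and a forcing $F$ built from $d_0$, $v$, and $n$ that lies in $L^2(0,T;H^1)$ by \eqref{as1} and \eqref{zz1}. A Galerkin scheme based on the eigenfunctions of the Dirichlet Laplacian yields a solution of the class \eqref{zz2}, and uniqueness follows from the standard $L^2$ energy estimate on the difference of two solutions.

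For the estimate \eqref{lm2a} I would run the following sequence of energy estimates on $[0,T_3]$. First, testing \eqref{yj2} against $d - d_0$ and integrating by parts produces a Gronwall inequality for $\|d\|_{H^1}$ whose growth rate is $O(c_2)$; restricting to $t \le T_3 \le c_2^{-22}$ absorbs this growth into a universal constant. Second, rewriting the equation as $\nu\Delta d = d_t + v\cdot\nabla d + \nu\sigma^{-2}(n+m)\cdot(d-m)n$ and invoking elliptic regularity reduces the $H^2$ bound on $d$ to an $L^2$ bound on $d_t$, which I would obtain by differentiating \eqref{yj2} in time and testing against $d_t$ (legitimate because the boundary data is time independent, hence $d_t|_{\partial\Omega}=0$); the time-derivative forcing terms such as $\int v_t\cdot\nabla d\, d_t$ and the $n_t$-contributions are controlled in $L^1(0,T_3)$ by $c_1$ via \eqref{cs1}. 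Third, applying one spatial derivative to the equation and using elliptic regularity together with an energy estimate obtained by testing the time-differentiated equation against $-\Delta d_t$ yields the $L^2(0,T_3;H^3)$ bound on $d$ and the $L^2(0,T_3;H^2)$ bound on $d_t$. Finally, $d_{tt}$ is read off directly from the time-differentiated equation using the bounds already obtained.

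The main technical obstacle is the careful bookkeeping of the powers of $c_1$ and $c_2$ that accumulate in the bilinear products $\|\nabla v\|_{L^\infty}\|\nabla^2 d\|_{L^2}$, $\|\nabla^2 n\|_{L^2}\|d\|_{W^{1,\infty}}$, and similar terms appearing on the right-hand sides of the energy inequalities. Each Gronwall step contributes a factor $\exp(Cc_2 t)$, and each inductive use of a previously established estimate feeds additional powers of $c_1,c_2$ back into the next step. The exponent $22$ in $T_3 = \min\{c_2^{-22},T\}$ is precisely what is required so that all these factors are uniformly controlled on $[0,T_3]$, leaving only powers of $c_0$ in the final bound \eqref{lm2a}; making this counting precise without losing the sharp $c_0$-dependence on the right-hand side is the delicate point of the argument.
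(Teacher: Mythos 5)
Your proposal follows essentially the same route as the paper: Galerkin for existence, then the same hierarchy of energy estimates --- differentiate \eqref{yj2} in time and test with $d_t$, use elliptic regularity to convert the $L^2$ bound on $d_t$ into $\|\nabla^2 d\|_{L^2}$, test the time-differentiated equation with $\Delta d_t$ to get $\|\nabla d_t\|_{L^2}$ and $\int_0^{T_3}\|d_t\|_{H^2}^2$, and apply one spatial derivative plus elliptic regularity for the $H^3$ bound, with $T_3=\min\{c_2^{-22},T\}$ absorbing the accumulated powers of $c_1,c_2$. The only step you leave implicit, which the paper carries out explicitly and which is needed to start each Gronwall argument, is evaluating $\|d_t(0)\|_{L^2}$ and $\|\nabla d_t(0)\|_{L^2}$ from the equation at $t=0$ (both bounded by $Cc_0^3$ using $d_0\in H^3$ and $u_0\in H^2$).
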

\begin{proof}
 By the classic Galerkin method to the linear parabolic equation
 (\ref{yj2}) with (\ref{cz1})-(\ref{bz1}), the existence and regularity of $d$ described in (\ref{zz2})  can be obtained.

Differentiating (\ref{yj2}) with respect to time, multiplying by $d_t$ and then integrating over $\Omega$, we can deduce that
\begin{align}
&\frac{1}{2}\frac{\mathrm{d}}{\mathrm{d}t}\int_{\Omega} |d_t|^2\mathrm{d}x+\nu\int_{\Omega} |\bigtriangledown d_t|^2\mathrm{d}x\nonumber\\
\leq& C(\|v_t\|_{L^6}\|\bigtriangledown d\|_{L^2}\|d_t\|_{L^3}+\|v\|_{L^\infty}\|\bigtriangledown d_t\|_{L^2}\|d_t\|_{L^2}+\|n\|_{L^3}\|n_t\|_{L^6}\|d-m\|_{L^3}\|d_t\|_{L^6}\nonumber\\
&\ \ \ +\|n+m\|_{L^3}\|n_t\|_{L^6}\|d-m\|_{L^3}\|d_t\|_{L^6}+\|n\|_{L^6}\|n+m\|_{L^6}\|d_t\|^2_{L^3})\nonumber\\
=&\sum_{i=1}^5 I_i.\label{d1}
\end{align}
Here
\begin{eqnarray}
&I_1&\leq C \eta \|v_t\|^2_{H^1}\|\bigtriangledown d\|^2_{L^2}+C\eta^{-1} \|d_t\|^2_{L^2}+\frac{\nu}{5}\|\bigtriangledown d_t\|^2_{L^2},\nonumber\\
&I_2&\leq  C\|v\|^2_{H^2}\|d_t\|^2_{L^2}+\frac{\nu}{5}\|\bigtriangledown d_t\|^2_{L^2},\nonumber\\
&I_3+I_4&\leq C((\|n\|^2_{L^2}+1)(\|n\|^2_{H^1}+1)\|n_t\|^4_{L^6}\|d-m\|^2_{L^2}+\|\bigtriangledown d\|^2_{L^2}+\|d-m\|^2_{L^2})\nonumber\\
&&\quad+\frac{\nu}{5}\|\bigtriangledown d_t\|^2_{L^2},\nonumber\\
&I_5&\leq C\|n\|^2_{H^1}\|n+m\|^2_{H^1}\|d_t\|^2_{L^2}+\frac{\nu}{5}\|\bigtriangledown d_t\|^2_{L^2},\nonumber
\end{eqnarray}
where the small positive constant $\eta$ will be fixed later.\\
Since
\begin{align}
&\frac{\mathrm{d}}{\mathrm{d}t}\int_\Omega |\bigtriangledown d|^2\mathrm{d}x=2\int_\Omega \bigtriangledown d:\bigtriangledown d_t\mathrm{d}x
\leq \int_\Omega |\bigtriangledown d|^2\mathrm{d}x+\int_\Omega|\bigtriangledown d_t|^2\mathrm{d}x,\label{d4}\\
&\frac{\mathrm{d}}{\mathrm{d}t}\int_\Omega |d-m|^2\mathrm{d}x \leq
\int_\Omega | d-m|^2\mathrm{d}x+\int_\Omega|
d_t|^2\mathrm{d}x,\label{d5}
\end{align}
combining (\ref{d1}), (\ref{d4}) and (\ref{d5}), we get
\begin{align}
&\frac{\mathrm{d}}{\mathrm{d}t}\int_{\Omega} |d_t|^2+|\bigtriangledown d|^2+|d-m|^2\mathrm{d}x+\int_{\Omega} |\bigtriangledown d_t|^2\mathrm{d}x\nonumber\\
\leq&C(\|\bigtriangledown d\|^2_{L^2}+\|d-m\|^2_{L^2}+\| d_t\|^2_{L^2})\cdot(\eta^{-1}+\|v\|^2_{H^2}+\|n\|^2_{H^1}\|n+m\|^2_{H^1}\nonumber\\
&+\eta \|v_t\|^2_{H^1} +(\|n\|^2_{L^2}+1)(\|n\|^2_{H^1}+1)\|\bigtriangledown n_t\|^4_{L^2}+1)
.
\label{d6}
\end{align}
From the equation (\ref{yj2}), we can deduce
\begin{align}
\|d_t\|_{L^2}(0)\leq& C(\|\bigtriangleup d_0\|_{L^2}+\|u_0\|_{H^2}\|\bigtriangledown d_0\|_{L^2}+\|d_0+m\|_{L^2}\|d_0-m\|_{L^2}\|d_0\|_{L^2})\nonumber\\
\leq& Cc_0^3.\label{d6a}
\end{align}
Hence, by Gronwall's inequality, we can deduce from (\ref{d6})
\begin{eqnarray}
&&\int_{\Omega} |d_t|^2+|\bigtriangledown d|^2+|d-m|^2\mathrm{d}x+\int_0^t\!\!\!\int_{\Omega} |\bigtriangledown d_t|^2\mathrm{d}x\mathrm{d}\tau\nonumber\\
&\leq&Cc_0^6\exp(C\int_0^t\eta^{-1}+\|v\|^2_{H^2}+\|n\|^2_{H^1}+\|n+m\|^2_{H^1}+\eta \|v_t\|^2_{H^1}\nonumber\\
&&+(\|n\|^2_{L^2}+1)(\|n\|^2_{H^1}+1)\|\bigtriangledown n_t\|^4_{L^2}\mathrm{d}\tau).\label{d7}
\end{eqnarray}
Taking $\eta =c_1^{-1}$ and using the assumption (\ref{cs0})-(\ref{cs2}), we obtain
\begin{eqnarray}
\sup_{0\leq t\leq T_2}\int_{\Omega} |d_t|^2+|\bigtriangledown d|^2+|d-m|^2\mathrm{d}x+\int_0^{T_2}\!\!\!\int_{\Omega} |\bigtriangledown d_t|^2\mathrm{d}x\mathrm{d}\tau\leq Cc_0^6,\ \label{d77}
\end{eqnarray}
where $T_2=\min\{c_2^{-8},T_1\}.$

From the equation (\ref{yj2}) and using the elliptic estimates, we get
\begin{eqnarray}
\| d-m\|_{H^2}
&\leq& C (\|d_0-m\|_{H^2}+\|d_t\|_{L^2}+\|v\cdot \bigtriangledown d \|_{L^2}+\|(n+m)\cdot(d-m)n\|_{L^2})\nonumber\\
&\leq& C(c_0^3+\|v\|_{H^1}\|\bigtriangledown d \|^\frac{1}{2}_{L^2}\|d-m\|^\frac{1}{2}_{H^2}+\|n+m\|_{L^6}\|d-m\|_{L^6}\|n\|_{L^6})\nonumber\\
&\leq& C(c_0^3+\|v\|^2_{H^1}\|\bigtriangledown d \|_{L^2}+\|n+m\|_{H^1}\|d-m\|_{H^1}\|n\|_{H^1})+\frac{1}{2}\|d\|_{H^2}.\nonumber
\end{eqnarray}
So
\begin{eqnarray}
\|\bigtriangledown^2 d\|_{L^2}\leq C(c_0^3+c_1^2c_0^3+c_1^2c_0^3)\leq Cc_1^2c_0^3.\label{d8}
\end{eqnarray}

Applying the operator $\bigtriangledown$ to the linear equation (\ref{yj2}), we get
\begin{eqnarray}
\nu\bigtriangleup (\bigtriangledown d)=\bigtriangledown d_t+\bigtriangledown (v\cdot \bigtriangledown d)+\frac{\nu}{\sigma^2}\bigtriangledown((n+m)\cdot(d-m)n).\label{yj22}
\end{eqnarray}
By the elliptic estimates, we can estimate the term $\|\bigtriangledown (d-m)\|_{H^2}$ as follows
\begin{align}
\|\bigtriangledown  (d-m)\|_{H^2}\leq& C(\|\bigtriangledown d_t\|_{L^2}+\|\bigtriangledown(v\cdot \bigtriangledown d)\|_{L^2}+\|\frac{\nu}{\sigma^2}\bigtriangledown[(n+m)\cdot(d-m)n]\|_{L^2}\nonumber\\
&+ \|d_0-m\|_{H^3})\nonumber\\
\leq& C(\|\bigtriangledown d_t\|_{L^2}+\|\bigtriangledown v\|_{H^1}\|\bigtriangledown d\|_{L^2}^\frac{1}{2}\|\bigtriangledown d\|_{H^1}^\frac{1}{2}+\|v\|_{H^2}\|\bigtriangledown^2 d\|_{L^2}\nonumber\\
&\quad+\|\bigtriangledown n\|_{H^1}\|n\|_{H^1}\|d-m\|_{H^1}+\|n\|_{H^2}^2\|\bigtriangledown d\|_{L^2}+ \|d_0-m\|_{H^3})\nonumber\\
\leq& C(\|\bigtriangledown d_t\|_{L^2}+c_2^2c_1c_0^3),\label{d10a}
\end{align}
where we use the assumption (\ref{cs0})-(\ref{cs2}) and
(\ref{d77})-(\ref{d8}).

Differentiating (\ref{yj2}) with respect to time and taking inner product with $\bigtriangleup d_t$, then we can derive
\begin{eqnarray}
&&\frac{1}{2}\frac{\mathrm{d}}{\mathrm{d}t}\int_\Omega |\bigtriangledown d_t|^2\mathrm{d}x+ \frac{\nu}{C} \| d_t\|^2_{H^2}\nonumber\\
&\leq&\frac{1}{2}\frac{\mathrm{d}}{\mathrm{d}t}\int_\Omega |\bigtriangledown d_t|^2\mathrm{d}x+\nu\int_\Omega |\bigtriangleup d_t|^2\mathrm{d}x\nonumber\\
&=&\int_\Omega (v_t\cdot \bigtriangledown d)\bigtriangleup d_t\mathrm{d}x+\int_\Omega (v\cdot \bigtriangledown d_t)\bigtriangleup d_t\mathrm{d}x+\frac{\nu}{\sigma^2}\int_\Omega (n_t\cdot (d-m))n\bigtriangleup d_t\mathrm{d}x\nonumber\\
&&+\frac{\nu}{\sigma^2}\int_\Omega (n\cdot d_t)n\bigtriangleup d_t\mathrm{d}x+\frac{\nu}{\sigma^2}\int_\Omega ((n+m)\cdot(d-m))n_t\bigtriangleup d_t\mathrm{d}x\nonumber\\
&=&\sum_{j=1}^5J_j,\label{d9}
\end{eqnarray}
where we use the elliptic estimate $\|d_t\|_{H^2}^2\leq C\|\bigtriangleup d_t\|^2_{L^2}$. \\
Here
\begin{eqnarray}
|J_1|&=&|\int_\Omega (v_t\cdot \bigtriangledown d) \bigtriangleup d_t\mathrm{d}x|\nonumber\\
&\leq& \|\bigtriangledown v_t\|_{L^2}\|\bigtriangledown d\|_{L^6}\| \bigtriangledown d_t\|_{L^3}+\|v_t\|_{L^3}\|\bigtriangledown^2 d\|_{L^6}\| \bigtriangledown d_t\|_{L^2}\nonumber\\
&\leq& \eta\|\bigtriangledown v_t\|^2_{L^2}+\eta^{-1}\|\bigtriangledown d\|^2_{H^1}\| \bigtriangledown d_t\|_{L^2}\| \bigtriangledown d_t\|_{H^1}+\eta\|v_t\|^2_{L^3}\|\bigtriangledown d_t\|^2_{L^2}\nonumber\\
&&+\eta^{-1}\|\bigtriangledown^2 d\|^2_{L^6}\nonumber\\
&\leq& \eta\|\bigtriangledown v_t\|^2_{L^2}+C\eta^{-2}\varepsilon ^{-1}c_1^8c_0^{12}\| \bigtriangledown d_t\|^2_{L^2}+\varepsilon\|d_t\|^2_{H^2}+\eta\|v_t\|^2_{H^1}\|\bigtriangledown d_t\|^2_{L^2}\nonumber\\
&&+C\eta^{-1}(\|\bigtriangledown d_t\|^2_{L^2}+c_2^4c_1^2c_0^6),\nonumber\end{eqnarray}
\begin{eqnarray}
|J_2|&=&|\int_\Omega (v\cdot \bigtriangledown d_t)\bigtriangleup d_t\mathrm{d}x|\leq C\varepsilon^{-1}c_2^2\|\bigtriangledown d_t\|^2_{L^2}+\varepsilon\|\bigtriangleup d_t\|^2_{L^2},\nonumber\\
|J_4|&\leq&C\varepsilon^{-1}\|n\|_{L^\infty}^4\|d_t\|_{L^2}^2+\varepsilon \|\bigtriangleup d_t\|^2_{L^2}\leq C\varepsilon^{-1}c_2^4c_0^6+\varepsilon \|\bigtriangleup d_t\|^2_{L^2},\qquad\qquad\qquad\qquad \qquad \qquad \nonumber\\
|J_3|+|J_5|&\leq&C\varepsilon^{-1}(\|n\|^2_{L^\infty}+1)\|n_t\|^2_{L^2}\|d-m\|^2_{L^\infty}+\varepsilon \|\bigtriangleup d_t\|_{L^2}^2\nonumber\\
&\leq& C\varepsilon^{-1}c_2^2c_1^6c_0^6+\varepsilon \|\bigtriangleup d_t\|_{L^2}^2,\nonumber
\end{eqnarray}
where the small positive constants $\varepsilon$ and $\eta$ will be fixed later.\\
Substituting the above $J_1-J_5$ into (\ref{d9}) and taking $\varepsilon$ small enough, we get
\begin{eqnarray}
&&\frac{\mathrm{d}}{\mathrm{d}t}\int_\Omega |\bigtriangledown d_t|^2\mathrm{d}x+  \| d_t\|_{H^2}^2\leq A_\eta(t)\| \bigtriangledown d_t\|_{L^2}^2+B_\eta(t),\label{d9e}
\end{eqnarray}
where
\begin{eqnarray}
A_\eta(t)&=&C(\eta^{-2}c_1^8c_0^{12}+\eta\|v_t\|^2_{H^1}
+\eta^{-1}+c_2^2),\nonumber\\
B_\eta(t)&=&C(\eta\|v_t\|^2_{H^1}+\eta^{-1}c_2^4c_1^2c_0^8+\eta^{-1}c_2^2c_1^6c_0^3+
c_2^4c_0^6).\nonumber
\end{eqnarray}
In view of the inequalities (\ref{d77})-(\ref{d8}) and taking $\eta = c_2^{-1}$, we have for all $t\in[0,T_2],$
\begin{eqnarray}
\begin{array}{ll}
\int_0^{t}A_\eta(s)\mathrm{d}s\leq C+Cc_2^2c_1^8c_0^{12}t,&
\int_0^{t}B_\eta(s)\mathrm{d}s\leq C+Cc_2^5c_1^4c_0^6t.
\end{array}\label{d9e1}\label{d9e2}
\end{eqnarray}
From the equation (\ref{yj22}), we can estimate the term $\|\bigtriangledown d_t(\tau)\|_{L^2}$ at time $0$
 \begin{eqnarray}
\|\bigtriangledown d_t(0)\|_{L^2} &\leq&C(\|d_0-m\|_{H^3}+ \|\bigtriangledown u_0\|_{L^3}\|\bigtriangledown ^2d_0\|_{L^6}+\|u_0\|_{L^3}\|\bigtriangledown^2d_0\|_{L^6}
\nonumber\\
 &&+\|\bigtriangledown d_0\|_{L^6}\|d_0-m\|_{L^6}\|d_0\|_{L^6}+\|d_0\|^2_{L^6}\|\bigtriangledown d_0\|_{L^6})\nonumber\\
 &\leq&Cc_0^3.\label{d9h}
 \end{eqnarray}
Applying Gronwall's inequality to (\ref{d9e}) and using (\ref{d9e1})-(\ref{d9h}), we can obtain
\begin{align}
\sup_{0\leq t\leq T_3}\int_\Omega |\bigtriangledown d_t|^2\mathrm{d}x+\int_0^{T_3} \| d_t\|_{H^2}^2\mathrm{d}t\leq& C(c_0^3+c_2^5c_1^4c_0^6T_3)\exp{(C+Cc_2^2c_1^8c_0^{12} T_3)}\nonumber\\
\leq& Cc_0^3.\label{d9f}
\end{align}
From (\ref{d10a}) and (\ref{d9f}), we have
\begin{eqnarray}
\|\bigtriangledown d\|_{H^2}(t)\leq C(c_0^\frac{3}{2}+c_2^2c_1
c_0^3), \,\,\forall t\in[0,T_3]. \label{dd3}
\end{eqnarray}
Hence
\begin{eqnarray}
\int_0^t \|d\|^2_{H^3}\mathrm{d}\tau\leq C, \qquad  \forall t\in [0,T_3].\label{d11}
\end{eqnarray}
Since (\ref{d77}), (\ref{d8}), (\ref{d9f}) and (\ref{d11}) can deduce the estimate (\ref{lm2a}), we complete the proof.
\end{proof}

\begin{lema}\label{lm3}
Under the hypotheses of Theorem \ref{j1}, then there exists a global
unique solution $u$ of the equation (\ref{dl2}) with the initial
data $u_0^\delta$ and the boundary condition (\ref{bz1}), and the
solution $u$ has the regularity in (\ref{zz2}).\\
Moreover, if $v,n$
satisfy (\ref{cs0})-(\ref{cs2}), then, for all small $\delta$,
\begin{eqnarray}
&&\sup_{0\leq t\leq T_3}(M(c_0)c_0^{13}\|u\|_{H^1_0}+M(c_0)c_0^{8}c_1^{-6}
\|\bigtriangledown ^2 u\|_{L^2}+M(c_0)c_0^{13}\|\sqrt{\rho}u_t\|_{L^2})\nonumber\\
&&+\int_0^{T_3}c_0^{8}\|\bigtriangledown u_t\|^2_{L^2}+\|u\|^2_{W^{2,6}}\mathrm{d}t\leq Cc_0^{18}M^2(c_0).\label{lm3a}
\end{eqnarray}
\end{lema}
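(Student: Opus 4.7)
The plan follows the classical framework of Cho--Kim \cite{kim1} for compressible Navier--Stokes with vacuum, adapted here to the extra coupling with $d$. Existence and uniqueness are routine: since $\rho_0^\delta \geq \delta > 0$, Lemma \ref{lm1} propagates positivity to $\rho \geq \delta\exp(-C\int_0^T \|\nabla v\|_{L^\infty}\,d\tau) > 0$ on $[0,T]$, so (\ref{dl2}) becomes a uniformly (though not uniformly in $\delta$) parabolic linear equation in $u$ whose coefficients and source terms enjoy the regularity supplied by Lemmas \ref{lm1}--\ref{lm2} and by (\ref{as1}). A Galerkin scheme in the eigenbasis of the Dirichlet Laplacian then produces the unique global strong solution with the regularity listed in (\ref{zz2}).

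The decisive initial estimate is $\|\sqrt{\rho_0^\delta}\,u_t(0)\|_{L^2}$, and this is precisely where the compatibility condition enters. Evaluating (\ref{dl2}) at $t=0$ and invoking construction (\ref{xr2}) to replace $\mu\Delta u_0^\delta - \nabla p(\rho_0^\delta) - \lambda(\nabla d_0)^T(\Delta d_0 - f(d_0))$ by $(\rho_0^\delta)^{1/2}g$, one obtains
$$\sqrt{\rho_0^\delta}\,u_t(0) = g - \sqrt{\rho_0^\delta}\,v(0)\cdot\nabla u_0^\delta,$$
whence $\|\sqrt{\rho_0^\delta}\,u_t(0)\|_{L^2} \leq C(\|g\|_{L^2} + c_0^{3}) \leq Cc_0^3$, using $\|\sqrt{\rho_0^\delta}\|_{L^\infty} \leq Cc_0^{1/2}$ and $\|u_0^\delta\|_{L^6}\|\nabla u_0^\delta\|_{L^3}\leq Cc_0^2$.

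Next, two layers of energy estimates are carried out. Multiplying (\ref{dl2}) by $u$ and applying Gronwall controls $\|\sqrt{\rho}u\|_{L^2}^2 + \int_0^t\|\nabla u\|_{L^2}^2\,d\tau$, with $\nabla p$ and $(\nabla d)^T(\Delta d-f(d))$ bounded by Lemmas \ref{lm1}--\ref{lm2}. Differentiating (\ref{dl2}) in time, testing with $u_t$, and absorbing $\int\rho_t|u_t|^2$ via the continuity equation yields an inequality of the form
$$\tfrac{1}{2}\frac{d}{dt}\!\int\!\rho|u_t|^2 \;+\; \mu\!\int\!|\nabla u_t|^2 \;\leq\; A(t)\!\int\!\rho|u_t|^2 \;+\; B(t),$$
where $A,B$ depend on $v_t, p_t, \nabla d, d_t, n_t$ and are integrable in time thanks to (\ref{cs1}) and Lemmas \ref{lm1}--\ref{lm2}. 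Gronwall on $[0,T_3]$, with $T_3 = \min\{c_2^{-22},T\}$ chosen short enough to absorb polynomial factors of $c_1,c_2$ into numerical constants, delivers the sup-bound on $\|\sqrt{\rho}u_t\|_{L^2}$ and the time-integral bound on $\|\nabla u_t\|_{L^2}^2$ required by (\ref{lm3a}). Spatial regularity is then recovered by viewing (\ref{dl2}) as an elliptic problem
$$\mu\Delta u = \rho u_t + \rho v\cdot\nabla u + \nabla p + \lambda(\nabla d)^T(\Delta d - f(d)), \qquad u|_{\partial\Omega}=0;$$
$L^2$ and $L^6$ elliptic regularity then give the bounds on $\|u\|_{H^2}$ and $\|u\|_{W^{2,6}}$ once the right-hand side is estimated using $\|\rho u_t\|_{L^6} \leq \|\rho\|_{L^\infty}\|\nabla u_t\|_{L^2}$ together with the preceding lemmas.

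The principal obstacle is the vacuum: since $\rho_0$ may vanish one cannot divide by $\rho$, so one is forced to work with the weighted quantity $\sqrt{\rho}u_t$ whose initial value is controlled only through the compatibility condition (\ref{xr1}). Beyond that, careful bookkeeping of how each term on the right of (\ref{dl2})---in particular the nonlinear coupling $(\nabla d)^T(\Delta d - f(d))$---contributes powers of $c_0,c_1,c_2$ is what forces the specific choice $T_3 = \min\{c_2^{-22},T\}$ and ultimately allows the final constant in (\ref{lm3a}) to be expressed purely in terms of $c_0$ (as $Cc_0^{18}M^2(c_0)$), uniformly in $\delta\in(0,1)$.
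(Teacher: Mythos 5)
Your overall strategy coincides with the paper's: positivity of $\rho$ for $\delta>0$ plus Galerkin for existence, the compatibility condition (\ref{xr2}) to control $\sqrt{\rho}u_t$ at $t=0$ (the paper phrases this as a $\limsup_{\tau\to 0+}$ of $\int\rho|u_t|^2$ rather than evaluating pointwise at $t=0$, but the content is the same), a Gronwall argument on the time-differentiated equation tested with $u_t$, and $L^2$/$L^6$ elliptic regularity for the $H^2$ and $W^{2,6}$ bounds.

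There is, however, one genuine gap in the central step. Your differential inequality
$\tfrac12\tfrac{d}{dt}\int\rho|u_t|^2+\mu\int|\nabla u_t|^2\le A(t)\int\rho|u_t|^2+B(t)$
with $A,B$ \emph{a priori} integrable cannot be derived. The term $-\int_\Omega\rho\, v_t\cdot\nabla u\cdot u_t\,\mathrm{d}x$ (the paper's $K_4$) forces a factor $\|\nabla u\|_{L^3}\le \|\nabla u\|_{L^2}^{1/2}\|\nabla u\|_{H^1}^{1/2}$, so after Young's inequality an additive term of order $\|\nabla u\|_{H^1}^2$ survives on the right-hand side; since $\|\nabla u\|_{H^1}$ is part of the unknown at this stage, it cannot be hidden inside an integrable $B(t)$, and Gronwall does not close as you state it. The paper's resolution is structural: it augments the Gronwall quantity to $\int\rho|u_t|^2+\|\nabla u\|_{L^2}^2$, keeps $C\|\nabla u\|_{H^1}^2$ explicitly on the right of (\ref{u1g}), and only then invokes the elliptic estimate (\ref{u4d}), which gives $\|\nabla u\|_{H^1}\le CM(c_0)c_0+Cc_1^6c_0^{10}+Cc_1^4(\|\nabla u\|_{L^2}+\|\sqrt{\rho}u_t\|_{L^2})$; substituting this back into the time-integrated inequality and applying Gronwall a second time closes the loop and yields (\ref{u5})--(\ref{u6}). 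In other words, the elliptic regularity step is not a postprocessing of the Gronwall bound, as in your ordering, but an essential ingredient \emph{inside} it. You have all the pieces, but as written the ordering of the argument would fail at the Gronwall step.
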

\begin{proof}
Because $\rho_0^\delta\geq \delta> 0$, it follows from the representation (\ref{mdb}) that
 \begin{eqnarray}
\rho(t,x)\geq\delta\exp(-\int_0^t |\bigtriangledown v(\tau)|_{W^{1,6}}\mathrm{d}\tau)\geq \underline{\delta},\qquad \forall (t,x)\in [0,T]\times \overline{\Omega},\label{mdd}
\end{eqnarray}
where $\underline{\delta}$ is a positive constant.\\
Thanks to (\ref{mdd}), we change (\ref{dl2}) into the following form:
\begin{eqnarray}
 u_t+ v\cdot \bigtriangledown u+\frac{1}{\rho}\bigtriangledown p=\frac{\mu}{\rho}\bigtriangleup u-\frac{\lambda}{\rho}(\bigtriangledown d)^{T}(\bigtriangleup d-f(d)). \nonumber
\end{eqnarray}
Applying the Galerkin method again to the above equation with the
initial data $u_0^\delta$ and the boundary condition (\ref{bz1}), we
can deduce the existence and regularity of $u$ described in
(\ref{zz2}).

Differentiating (\ref{dl22}) with respect to time $t$, multiplying by $u_t$ and then integrating over $\Omega$, we can derive
\begin{eqnarray}
&&\frac{1}{2}\frac{\mathrm{d}}{\mathrm{d}t}\int_\Omega \rho |u_t|^2\mathrm{d}x+\mu\int_\Omega |\bigtriangledown u_t|^2\mathrm{d}x\nonumber\\
&=&\int_\Omega (-\bigtriangledown p_t-\rho_t v\cdot \bigtriangledown u-2\rho  v\cdot \bigtriangledown u_t-\rho v_t\cdot \bigtriangledown u))u_t\mathrm{d}x\nonumber\\
&&-\lambda\int_\Omega  (\bigtriangledown d_t)^T(\bigtriangleup d-f(d))u_t\mathrm{d}x-\lambda\int_\Omega  (\bigtriangledown d)^T(\bigtriangleup d-f(d))_tu_t\mathrm{d}x\nonumber\\
&=&\sum_{k=1}^6 K_k.\label{u1}
\end{eqnarray}
Here
\begin{align}
|K_1|&\leq C\|p_t\|_{L^2}^2+\frac{\mu}{8}\|\bigtriangledown u_t\|_{L^2}^2\leq CM^2(c_0)c_0^2c_2^2+\frac{\mu}{8}\|\bigtriangledown u_t\|_{L^2}^2,\nonumber\\
|K_2|&\leq C\|\rho_t\|_{L^6}^2\|v\|^2_{L^6}\|\bigtriangledown u\|^2_{L^2}+\frac{\mu}{8}\|\bigtriangledown u_t\|_{L^2}^2\leq Cc_2^2c_1^2c_0^2\|\bigtriangledown u\|^2_{L^2}+\frac{\mu}{8}\|\bigtriangledown u_t\|_{L^2}^2,\ \ \ \nonumber
\end{align}
\begin{align}
|K_3|&\leq C\|\rho\|_{L^\infty}\|v\|^2_{L^\infty}\|\sqrt{\rho}u_t\|^2_{L^2}+\frac{\mu}{8}\|\bigtriangledown u_t\|_{L^2}^2\leq Cc_2^2c_0\|\sqrt{\rho}u_t\|^2_{L^2}+\frac{\mu}{8}\|\bigtriangledown u_t\|_{L^2}^2,\nonumber\\
|K_4|&\leq\eta \|\bigtriangledown v_t\|^2_{L^2}\|\sqrt{\rho}u_t\|^2_{L^2}+\eta^{-1}\|\rho\|_{L^\infty}\|\bigtriangledown u\|_{L^2}\|\bigtriangledown u\|_{H^1}\nonumber\\
& \leq\eta \|\bigtriangledown v_t\|^2_{L^2}\|\sqrt{\rho}u_t\|^2_{L^2}+\eta^{-2}c_0^2\|\bigtriangledown u\|^2_{L^2}+\|\bigtriangledown u\|^2_{H^1},\nonumber
\end{align}
\begin{eqnarray}
|K_5|&\leq&C( \|\bigtriangledown^2 d\|_{L^3}\|\bigtriangledown d_t\|_{L^2}\|u_t\|_{L^6}+\|\bigtriangledown d_t\|_{L^2}\|d\|_{L^3}\|u_t\|_{L^6}\|d\|_{L^\infty}^2\nonumber\\
&&+\|\bigtriangledown d_t\|_{L^2}\|d\|_{L^3}\|u_t\|_{L^6})\nonumber\\
&\leq&C\|\bigtriangledown^2 d\|_{L^2}\|\bigtriangledown ^2 d\|_{L^6}\|\bigtriangledown d_t\|^2_{L^2} +C\|\bigtriangledown d_t\|^2_{L^2}\|d\|_{L^2}\|d\|_{H^1}\|d\|_{H^2}^4\qquad \ \  \ \nonumber\\
&&+C\|\bigtriangledown d_t\|^2_{L^2}\|d\|_{L^2}\|d\|_{H^1}+\frac{\mu}{8\lambda}\|\bigtriangledown u_t\|^2_{L^2}\nonumber\\
&\leq& Cc_2^2c_1^6c_0^{21}+\frac{\mu}{8\lambda}\|\bigtriangledown u_t\|^2_{L^2},\nonumber
\end{eqnarray}
\begin{align}
|K_6|\leq&C(\|\bigtriangledown^2 d\|_{L^3}\|\bigtriangledown d_t\|_{L^2}\|u_t\|_{L^6}+\|\bigtriangledown d\|_{L^\infty}\|\bigtriangledown d_t\|_{L^2}\|\bigtriangledown u_t\|_{L^2}\nonumber\\
&+\|\bigtriangledown d\|_{L^2}\|d_t\|_{L^3}\|u_t\|_{L^6}+\|\bigtriangledown d\|_{L^3}\|d_t\|_{L^6}\|d\|_{L^6}^2\|u_t\|_{L^6})\nonumber\\
\leq&C(\|\bigtriangledown^2 d\|_{L^2}\|\bigtriangledown ^2 d\|_{L^6}\|\bigtriangledown d_t\|^2_{L^2}+ \|\bigtriangledown  d\|^2_{W^{1,6}}\|\bigtriangledown d_t\|^2_{L^2}\nonumber\\
&+\| d_t\|_{H^1}\|d_t\|_{L^2}\|\bigtriangledown d\|^2_{L^2}+\|\bigtriangledown d\|_{L^2}\|\bigtriangledown d\|_{H^1}\|\bigtriangledown d_t\|^2_{L^2}\|d\|^4_{H^1})\qquad\qquad\nonumber\\
&+\frac{\mu}{8\lambda}\|\bigtriangledown u_t\|^2_{L^2}\nonumber\\
\leq&Cc_2^4c_1^2c_0^{17}+\frac{\mu}{8\lambda}\|\bigtriangledown u_t\|^2_{L^2}.\nonumber
\end{align}
On the other hand
\begin{eqnarray}
\frac{\mathrm{d}}{\mathrm{d}t}\|\bigtriangledown u\|^2_{L^2}=\int_\Omega\bigtriangledown u:\bigtriangledown u_t \mathrm{d}x\leq C\|\bigtriangledown u\|^2_{L^2}+\frac{\mu}{8}\|\bigtriangledown u_t\|^2_{L^2}.\label{u2}
\end{eqnarray}
Substituting $|K_1|-|K_6|$ into (\ref{u1}) and combing with
(\ref{u2}), we  obtain
\begin{eqnarray}
&&\frac{\mathrm{d}}{\mathrm{d}t}\int_\Omega (\rho |u_t|^2+|\bigtriangledown u|^2)\mathrm{d}x+\int_\Omega |\bigtriangledown u_t|^2\mathrm{d}x\nonumber\\
&\leq& \mathscr{A}_\eta(t)(\|\sqrt{\rho}u_t\|^2_{L^2}+\|\bigtriangledown u\|^2_{L^2})+\mathscr{B}(t)+C\|\bigtriangledown u\|^2_{H^1},\label{u1g}
\end{eqnarray}
where
\begin{eqnarray}
\begin{array}{ll}
\mathscr{A}_\eta(t)=C(c_2^2c_1^2c_0^2+\eta\|\bigtriangledown v_t\|^2_{L^2}+\eta^{-2}c_0^2),&
\mathscr{B}(t)=Cc_2^4c_1^4c_0^{21}M^2(c_0).
\end{array}\nonumber
\end{eqnarray}
Taking $\eta =c_2^{-1}$ and using the estimates (\ref{md4c})-(\ref{yq3}) and (\ref{lm2a}), we have  $\ \forall t\in [0,T_3],$
\begin{eqnarray}
\begin{array}{ll}
\int_0^t\mathscr{A}_\eta(s)\mathrm{d}s \leq C+Cc_2^4t,&
\int_0^t\mathscr{B}(s)\mathrm{d}s\leq Cc_2^4c_1^4c_0^{21}M^2(c_0)t+C.
\end{array}\label{u1g1}\label{u1g2}
\end{eqnarray}
Multiplying (\ref{dl2}) by $u_t$, then integrating it over $\Omega$ and using the Young's inequality, we can obtain
\begin{eqnarray}
\int_\Omega \rho |u_t|^2\mathrm{d}x(\tau)& \leq &C\int_\Omega \rho |v|^2|\bigtriangledown u|^2+|\rho^{-\frac{1}{2}}(\mu\bigtriangleup u-\lambda\mathrm{div}(\bigtriangledown d\odot\bigtriangledown d\nonumber\\
&&\qquad-\frac{1}{2}( |\bigtriangledown d|^2+F(d))I)-\nabla p)|^2\mathrm{d}x(\tau).\nonumber
\end{eqnarray}
Hence
\begin{eqnarray}
\limsup_{\tau\rightarrow 0+}\int \rho |u_t|^2\mathrm{d}x(\tau)\leq C(c_0^5+\|g\|_{L^2}^2)\leq  Cc_0^5.\label{cz2}
\end{eqnarray}
Integrating (\ref{u1g}) with respect to time $(\tau,t)$ and letting
$\tau\rightarrow 0+$, thanks to (\ref{cz2}) and Gronwall's
inequality and the inequality (\ref{u1g1}), we have for all $t\in[0,T_3],$
\begin{eqnarray}
&&\int_\Omega (\rho |u_t|^2+|\bigtriangledown u|^2)\mathrm{d}x(t)+\int_0^{t}\!\!\!\int_\Omega |\bigtriangledown u_t|^2\mathrm{d}x\mathrm{d}\tau\nonumber\\
&\leq& Cc_0^7M^2(c_0)+C\int_0^{t}\|\bigtriangledown u\|^2_{H^1}\mathrm{d}\tau.\label{u3}
\end{eqnarray}

Using the elliptic regularity result to the linear movement equation (\ref{dl2}), we can estimate the term $\|\bigtriangledown ^2 u\|_{L^2}$ as follows
\begin{eqnarray}
\|\bigtriangledown  u\|_{H^1}&\leq& C(\|\rho u_t\|_{L^2}+\|\rho v\cdot \bigtriangledown u\|_{L^2}+\|\bigtriangledown p\|_{L^2}+\|\bigtriangledown u\|_{L^2}\nonumber\\
&&+\|(\bigtriangledown d)^{T}(\bigtriangleup d-f(d))\|_{L^2}).\ \label{u4}
\end{eqnarray}
From the assumption (\ref{cs0})-(\ref{cs2}) and the estimates (\ref{md4c}), (\ref{yq3}) and (\ref{lm2a}), we can derive
\begin{eqnarray}
\|(\bigtriangledown d)^{T}\bigtriangleup d\|_{L^2}&\leq& C\|(\bigtriangledown d)^{T}(d_t+v\cdot \bigtriangledown d)\|_{L^2}+ C\|(\bigtriangledown d)^{T}[(n+m)\cdot(d-m)]n\|_{L^2}\nonumber\\
&\leq& C(\|\bigtriangledown d\|_{L^2}^\frac{1}{2}\|\bigtriangledown d\|_{H^1}^\frac{1}{2}\|\bigtriangledown d_t\|_{L^2}+\|\bigtriangledown d\|_{H^1}\|v\|_{H^1}\| \bigtriangledown d\|_{H^1}\nonumber\\
&&+ \|d\|_{H^2}\|n+m\|_{L^6}\|d-m\|_{H^2}\|n\|_{L^6})\nonumber\\
&\leq &Cc_1^6c_0^6,\label{u4a}
\end{eqnarray}
\begin{eqnarray}
\|(\bigtriangledown d)^{T}f(d)\|_{L^2}&\leq& C\|\bigtriangledown d\|_{H^1}\|\|d+m\|_{L^6}\|d-m\|_{L^6}\|d\|_{H^2}\leq Cc_1^4c_0^{12}\qquad\qquad\qquad\label{u4b}
\end{eqnarray}
and
\begin{eqnarray}
&&\|\rho u_t\|_{L^2}+\|\rho v\cdot \bigtriangledown u\|_{L^2}+\|\bigtriangledown p\|_{L^2}+\|\bigtriangledown u\|_{L^2}\nonumber\\
&\leq& \|\sqrt{\rho}\|_{L^\infty}\|\sqrt{\rho} u_t\|_{L^2}+\|\rho\|_{L^\infty}\| v\|_{L^6}\| \bigtriangledown u\|_{L^3}+CM(c_0)c_0+\|\bigtriangledown u\|_{L^2}\nonumber\\
&\leq& Cc_0^\frac{1}{2}\|\sqrt{\rho} u_t\|_{L^2}+Cc_0c_1\| \bigtriangledown u\|^\frac{1}{2}_{L^2}\| \bigtriangledown u\|^\frac{1}{2}_{H^1}+CM(c_0)c_0+\|\bigtriangledown u\|_{L^2}\nonumber\\
&\leq& CM(c_0)c_0+ Cc_1^4(\|\bigtriangledown u\|_{L^2}+\|\sqrt{\rho} u_t\|_{L^2})+\frac{1}{2}\| \bigtriangledown u\|_{H^1}.\label{u4c}
\end{eqnarray}
Taking (\ref{u4a})-(\ref{u4c}) into (\ref{u4}), we can deduce
\begin{eqnarray}
\|\bigtriangledown  u\|_{H^1}\leq  Cc_1^6c_0^{10}+CM(c_0)c_0+ Cc_1^4(\|\bigtriangledown u\|_{L^2}+\|\sqrt{\rho} u_t\|_{L^2}). \label{u4d}
\end{eqnarray}
Taking (\ref{u4d}) into (\ref{u3}) and using Gronwall's inequality, we can derive for all $t\in[0,T_3]$
\begin{eqnarray}
\int_\Omega (\rho |u_t|^2+|\bigtriangledown u |^2)\mathrm{d}x+\int_0^{t}\!\!\!\int_\Omega |\bigtriangledown u_t|^2\mathrm{d}x\mathrm{d}\tau\leq Cc_0^{10}M^2(c_0).\ \label{u5}
\end{eqnarray}
So substituting (\ref{u5}) into (\ref{u4d}), we get
\begin{eqnarray}
\|\bigtriangledown  u\|_{H^1}\leq  Cc_1^6c_0^{10}M(c_0).\label{u6}
\end{eqnarray}
Using the elliptic regularity result to the linear movement equation(\ref{dl2}), the term $\|\bigtriangledown ^2 u\|_{L^6}$ can be controlled as follows
\begin{eqnarray}
\|\bigtriangledown^2 u\|_{L^6}&\leq& C(\|\rho u_t\|_{L^6}+\|\rho v\cdot \bigtriangledown u\|_{L^6}+\|\bigtriangledown p\|_{L^6}+\|(\bigtriangledown d)^{T}(\bigtriangleup d-f(d))\|_{L^6}\nonumber\\
&&+\|\bigtriangledown u\|_{L^6}).\ \label{u7}
\end{eqnarray}
It follows from the assumption (\ref{cs0})-(\ref{cs2}) and the estimates (\ref{md4c}), (\ref{yq3}) and (\ref{u6}) that
\begin{eqnarray}
&&\|\rho u_t\|_{L^6}+\|\rho v\cdot \bigtriangledown u\|_{L^6}+\|\bigtriangledown p\|_{L^6}+\|\bigtriangledown u\|_{L^6}\nonumber\\
&\leq&\|\rho\|_{L^\infty} \|u_t\|_{L^6}+\|\rho\|_{\infty}\| v\|_{L^\infty}\|\bigtriangledown u\|_{L^6}+CM(c_0)c_0+Cc_1^6c_0^{10}M(c_0)\nonumber\\
 &\leq& Cc_0 \|\bigtriangledown u_t\|_{L^2}+Cc_2c_1^6c_0^{11}M(c_0).
\label{u7a}
\end{eqnarray}
It follows from the estimates (\ref{d8}) and (\ref{dd3}) that
\begin{eqnarray}
\|(\bigtriangledown d)^{T}(\bigtriangleup d-f(d))\|_{L^6}
&\leq& C\|d\|^2_{H^3}+C \| \bigtriangledown d\|_{L^6}\|d-m\|_{H^2}\|d+m\|_{H^2}\|d\|_{H^2}\nonumber\\
 &\leq&Cc_2^4c_1^2c_0^6+Cc_1^8 c_0^{12}\nonumber\\
 &\leq&Cc_2^4c_1^4c_0^{12}.\label{u7b}\label{u7c}
\end{eqnarray}
Taking (\ref{u7a}) and (\ref{u7c}) into (\ref{u7}), integrating it over time and using the estimate (\ref{u5}), we can derive for all $t\in[0,T_3],$
\begin{eqnarray}
\int_0^t\|\bigtriangledown^2 u\|^2_{L^6}\mathrm{d}\tau\leq  Cc_0^2\int_0^t \|\bigtriangledown u_t\|^2_{L^2}\mathrm{d}\tau+Cc_0^{18}M^2(c_0)
\leq Cc_0^{18}M^2(c_0).\nonumber
\end{eqnarray}
\end{proof}

It is obvious that Lemma \ref{lm1}-Lemma \ref{lm3} imply Theorem
\ref{j1}.


\subsection{Local existence of a solution to the linear problem (\ref{md2})-(\ref{dl2})}\label{localexlinear}
Since the estimates obtained in the Lemma \ref{lm1}-Lemma \ref{lm3} are
uniform for all small $\delta $, we have the following theorem:

\begin{thm}\label{tma}
 If the initial condition $(\rho_0, u_0, d_0)$ satisfies the regularity $(\ref{zz1})$ and the compatibility condition (\ref{xr1}), then there exists a unique strong solution $(\rho,u,d)$ to the linear equations (\ref{md2})-(\ref{dl2}) with initial boundary value (\ref{cz1})-(\ref{bz1}) such that
\begin{eqnarray}
\begin{array}{ll}
\rho\in C([0,T_3]; W^{1,6}),& \rho_t\in C([0,T_3]; L^6),\\
u\in C([0,T_3]; H_0^1\cap H^2)\cap L^2(0,T_3; W^{2,6}),& u_t\in L^2(0,T_3; H_0^1),\\
d\in C([0,T_3];H^2)\cap L^2(0,T_3; H^3),& d_t\in C([0,T_3]; H_0^1)\cap L^2(0,T_3; H^2),\\
\sqrt{\rho}u_t\in C([0,T_3]; L^2).
\end{array}\label{ze1}
\end{eqnarray}
 Moreover, $(\rho,u,d)$ also satisfies the inequalities (\ref{md4c})-(\ref{yq3}), (\ref{lm2a}) and (\ref{lm3a}).
\end{thm}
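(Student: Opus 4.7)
The plan is to obtain $(\rho,u,d)$ as a weak limit of the approximate solutions $(\rho^\delta,u^\delta,d^\delta)$ constructed in Theorem \ref{j1}, exploiting the fact that the bounds in Lemmas \ref{lm1}--\ref{lm3} depend only on $c_0,c_1,c_2,T$ and not on $\delta$. First I would note that since $u_0^\delta\to u_0$ in $H_0^1\cap H^2$ and $\rho_0^\delta=\rho_0+\delta\to \rho_0$ in $W^{1,6}$ as $\delta\to 0$, the constant $c_0$ in (\ref{cs0}) can be chosen uniformly for all small $\delta$; the compatibility condition (\ref{xr1}) provides the bound (\ref{cz2}) for $\|\sqrt{\rho^\delta}u_t^\delta(0)\|_{L^2}$ uniformly in $\delta$, which is the key point that prevents degeneration as $\rho_0$ touches zero.

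Next I would extract subsequential limits. From the uniform estimates (\ref{md4c}), (\ref{lm2a}), (\ref{lm3a}) and standard weak-$*$ compactness I would pass to the limit to obtain
\begin{eqnarray*}
\rho^\delta \rightharpoonup^* \rho \text{ in } L^\infty(0,T_3;W^{1,6}),\quad
u^\delta \rightharpoonup^* u \text{ in } L^\infty(0,T_3;H_0^1\cap H^2)\cap L^2(0,T_3;W^{2,6}),
\end{eqnarray*}
together with analogous weak-$*$ limits for $d^\delta,d_t^\delta,u_t^\delta,\rho_t^\delta,\sqrt{\rho^\delta}u_t^\delta$. Aubin--Lions compactness yields strong convergence of $\rho^\delta\to\rho$ in $C([0,T_3];L^p)$ for $p<\infty$, $u^\delta\to u$ in $C([0,T_3];H^1)\cap L^2(0,T_3;H^2)$, and $d^\delta\to d$ in $C([0,T_3];H^2)\cap L^2(0,T_3;H^{3-\epsilon})$, which is more than enough to pass to the limit in every nonlinear term of (\ref{md2})--(\ref{dl2}) (the worst terms being $\rho v\cdot\bigtriangledown u$ and $(\bigtriangledown d)^T(\bigtriangleup d-f(d))$, both of which are products of weakly/strongly convergent factors). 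The limit $(\rho,u,d)$ then solves the linear system with initial data $(\rho_0,u_0,d_0)$ and satisfies (\ref{md4c}), (\ref{lm2a}), (\ref{lm3a}) by lower semicontinuity.

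The regularity statements in (\ref{ze1}) then follow from the uniform bounds combined with a standard strong-weak continuity argument. For the temporal continuity $\sqrt{\rho}u_t\in C([0,T_3];L^2)$, which is the most delicate point since $\rho$ vanishes, I would argue by weak continuity of $\sqrt{\rho}u_t$ in $L^2$ together with continuity of $\|\sqrt{\rho}u_t\|_{L^2}^2$, the latter obtained by integrating the identity coming from testing $\partial_t$ of (\ref{dl22}) against $u_t$; the initial value at $t=0$ is controlled by (\ref{cz2}), whose right-hand side is finite thanks precisely to (\ref{xr1}). Continuity of $\rho$ in $W^{1,6}$ and of $d_t$ in $H^1$ follows from the transport/parabolic structure together with the integrability of $\rho_t$ and $d_{tt}$.

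For uniqueness, given two solutions $(\rho_1,u_1,d_1),(\rho_2,u_2,d_2)$ with the same initial data and the same $(v,n)$, set $(\bar\rho,\bar u,\bar d)=(\rho_1-\rho_2,u_1-u_2,d_1-d_2)$. The equations for these differences are linear with the same coefficients, so a standard energy estimate — testing the $\bar\rho$ equation against $\bar\rho|\bar\rho|^4$, the $\bar d$ equation against $\bar d$ and $-\bigtriangleup\bar d$, and the $\bar u$ equation against $\bar u$ — yields a Gronwall-type inequality forcing $(\bar\rho,\bar u,\bar d)\equiv 0$ on $[0,T_3]$. The main obstacle I expect is not any single estimate but rather keeping careful track of which bounds are genuinely independent of $\delta$ when passing to the limit in the $\sqrt{\rho^\delta}u_t^\delta$ term, since both factors depend on $\delta$ and one must use strong $L^p$ convergence of $\sqrt{\rho^\delta}$ together with weak $L^2$ convergence of $u_t^\delta$ to identify the limit.
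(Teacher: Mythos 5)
Your proposal follows essentially the same route as the paper: pass to the limit $\delta\to 0$ in the approximate solutions of Theorem \ref{j1} using the $\delta$-independent bounds of Lemmas \ref{lm1}--\ref{lm3}, Aubin--Lions compactness and lower semicontinuity, then recover the time continuity (the paper does this for $\rho u_t$ via the Lions--Magenes interpolation lemma rather than your ``weak continuity plus continuity of the norm'' device, but the two are interchangeable here) and prove uniqueness by the same energy/Gronwall argument the paper defers to Section \ref{unique}. The argument is sound and matches the paper's proof in all essential respects.
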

Before proof, we give the following two classical lemmas which are proved in the book \cite{temam}.
\begin{lema}\label{ct}
Let $Y=\{v|v\in L^{\alpha_0}(0,T;X_0),v_t\in L^{\alpha_1}(0,T;X_1)\}$\;with norm $|v|_Y=|v|_{L^{\alpha_0}(0,T;X_0)}+|v_t|_{L^{\alpha_1}(0,T;X_1)}$\;where $1<\alpha_0,\alpha_1<\infty,X_0\subset X\subset X_1$\;are Banach spaces and $X_0,X_1$\;are reflexive. Suppose that the injections $X_0\hookrightarrow X\hookrightarrow X_1$\; are continuous, and the injection from $X_0$\;into $X$\;is compact. Then the injection from $Y$\; into $ L^{\alpha_0}(0,T;X)$\; is compact.
\end{lema}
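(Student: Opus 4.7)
The statement is the classical Aubin--Lions compactness lemma, and I would prove it by combining Ehrling's interpolation inequality with an Arzela--Ascoli argument in the weakest space $X_1$. The plan is as follows.

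First, I would establish the Ehrling-type inequality: for every $\varepsilon>0$ there exists $C_\varepsilon>0$ such that
$$\|w\|_X \le \varepsilon\,\|w\|_{X_0}+C_\varepsilon\,\|w\|_{X_1}\qquad \forall\, w\in X_0.$$
The proof is by contradiction; if it failed one could normalize to obtain $w_n$ with $\|w_n\|_{X_0}=1$, $\|w_n\|_{X_1}\to 0$, and $\|w_n\|_X\ge\varepsilon_0>0$. The compact embedding $X_0\hookrightarrow X$ extracts a subsequence converging strongly in $X$ to some $w_\infty$ with $\|w_\infty\|_X\ge\varepsilon_0$, while the continuous embedding $X\hookrightarrow X_1$ forces $w_\infty=0$, a contradiction.

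Next, let $\{v_n\}$ be a bounded sequence in $Y$; I want to show that a subsequence converges strongly in $L^{\alpha_0}(0,T;X)$. Since $X_0$ and $X_1$ are reflexive and $1<\alpha_0,\alpha_1<\infty$, the spaces $L^{\alpha_0}(0,T;X_0)$ and $L^{\alpha_1}(0,T;X_1)$ are reflexive; extracting a weakly convergent subsequence and subtracting its limit (which itself lies in $Y$), I may assume that $v_n\rightharpoonup 0$ in $L^{\alpha_0}(0,T;X_0)$ and $(v_n)_t\rightharpoonup 0$ in $L^{\alpha_1}(0,T;X_1)$. Raising the Ehrling inequality to the power $\alpha_0$ and integrating in $t$ yields
$$\int_0^T\|v_n\|_X^{\alpha_0}\,dt \le C\bigl(\varepsilon^{\alpha_0}\|v_n\|_{L^{\alpha_0}(0,T;X_0)}^{\alpha_0}+C_\varepsilon^{\alpha_0}\|v_n\|_{L^{\alpha_0}(0,T;X_1)}^{\alpha_0}\bigr),$$
so a diagonal argument in $\varepsilon$ reduces the problem to showing $v_n\to 0$ strongly in $L^{\alpha_0}(0,T;X_1)$.

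For this, the time regularity gives the H\"older estimate
$$\|v_n(t)-v_n(s)\|_{X_1}\le |t-s|^{1-1/\alpha_1}\,\|(v_n)_t\|_{L^{\alpha_1}(0,T;X_1)},$$
so $\{v_n\}$ is uniformly equicontinuous as a family in $C([0,T];X_1)$. To promote this to pointwise relative compactness in $X_1$ (which does not follow directly from an $L^{\alpha_0}$ bound in $X_0$, since individual slices $v_n(t)$ need not lie in $X_0$ for a fixed $t$), I would introduce the time-averaged functions $v_n^h(t)=\tfrac{1}{h}\int_t^{t+h}v_n(s)\,ds$: these are bounded in $X_0$, hence precompact in $X$ and in $X_1$ by the compact embedding, while the previous H\"older bound yields $\|v_n^h(t)-v_n(t)\|_{X_1}\le C\,h^{1-1/\alpha_1}$. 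Consequently $\{v_n(t)\}$ is totally bounded in $X_1$ for each $t$, Arzela--Ascoli extracts a subsequence convergent in $C([0,T];X_1)$, and the limit must coincide with the weak limit $0$. Uniform convergence on $[0,T]$ combined with the uniform bound in $L^{\alpha_0}(0,T;X_1)$ then implies $v_n\to 0$ in $L^{\alpha_0}(0,T;X_1)$, completing the proof.

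The principal obstacle is precisely the passage from an integral bound in $X_0$ to pointwise compactness in $X_1$; the averaging trick above is the standard device that circumvents the lack of a pointwise trace, and all other steps are soft functional-analytic manipulations.
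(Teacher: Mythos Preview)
Your argument is correct and is the standard proof of the Aubin--Lions lemma (Ehrling's inequality plus equicontinuity in $X_1$ via the time-averaging trick, followed by Arzel\`a--Ascoli). The paper itself does not prove this lemma at all: it simply states it as a classical result and refers the reader to Temam~\cite{temam}, so there is nothing to compare against beyond noting that your proof is essentially the one found in that reference.
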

\begin{lema}\label{it}{(An Interpolation Theorem)}
Let $V,\ H, \ V'$ be three Hilbert spaces, each space included in the following one as $V\subset H\equiv H'\subset V'$, $V'$ being the dual of $V$. If a function $u$ belongs to $L^2(0,T; V)$ and its derivative $u_t$ belongs to $L^2(0,T; V')$, then $u$ is almost everywhere equal to a function continuous form $[0,T]$ into $H$.
\end{lema}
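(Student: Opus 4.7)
The plan is to produce a continuous representative by mollifying $u$ in time, establishing a fundamental energy identity for the smooth approximants, and then extracting a Cauchy sequence in $C([0,T];H)$ whose limit must coincide with $u$ almost everywhere.

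First I would extend $u$ to a function $\tilde u:\mathbb{R}\to V$ with compact support so that simultaneously $\tilde u\in L^2(\mathbb{R};V)$ and $\tilde u_t\in L^2(\mathbb{R};V')$. Reflection across the endpoints $t=0$ and $t=T$, followed by multiplication by a smooth temporal cutoff, does the job (a naive zero extension is ruled out because it would create Dirac masses in $\tilde u_t$). Convolving with a standard time mollifier $\rho_\varepsilon$ then produces $u_\varepsilon:=\rho_\varepsilon*\tilde u\in C^\infty(\mathbb{R};V)$, with $(u_\varepsilon)_t=\rho_\varepsilon*\tilde u_t$, so that $u_\varepsilon\to u$ in $L^2(0,T;V)$ and $(u_\varepsilon)_t\to u_t$ in $L^2(0,T;V')$ as $\varepsilon\to 0$.

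For each fixed $\varepsilon$, the scalar map $t\mapsto \|u_\varepsilon(t)\|_H^2$ is $C^\infty$ with classical derivative $2\bigl((u_\varepsilon)_t(t),u_\varepsilon(t)\bigr)_H$. Because $(u_\varepsilon)_t(t)\in V\subset H\subset V'$ and $H\equiv H'$ is self-dual, this $H$-inner product coincides with the duality pairing $\langle (u_\varepsilon)_t(t),u_\varepsilon(t)\rangle_{V',V}$, so integration yields the energy identity
$$\|u_\varepsilon(t)\|_H^2-\|u_\varepsilon(s)\|_H^2=2\int_s^t \langle (u_\varepsilon)_t,u_\varepsilon\rangle_{V',V}\,dr.$$
Applying the same identity to the difference $u_\varepsilon-u_\delta$ and averaging $s$ over $[0,T]$ gives
$$T\,\|u_\varepsilon(t)-u_\delta(t)\|_H^2\le \|u_\varepsilon-u_\delta\|_{L^2(0,T;H)}^2+2T\,\|(u_\varepsilon-u_\delta)_t\|_{L^2(0,T;V')}\|u_\varepsilon-u_\delta\|_{L^2(0,T;V)}.$$
Both terms on the right tend to $0$ as $\varepsilon,\delta\to 0$ (the first by the embedding $V\hookrightarrow H$ applied to $u_\varepsilon\to u$ in $L^2(0,T;V)$), so $\{u_\varepsilon\}$ is Cauchy in $C([0,T];H)$ with limit $\bar u\in C([0,T];H)$. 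Combined with $u_\varepsilon\to u$ in $L^2(0,T;H)$, uniqueness of limits forces $u=\bar u$ almost everywhere, which is the claim.

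The main obstacle is the careful bookkeeping at the time endpoints: the extension $\tilde u$ must be arranged so that \emph{both} the $V$-regularity of $\tilde u$ and the $V'$-regularity of $\tilde u_t$ are preserved. Once the extension and mollification are set up correctly, the transition from the pointwise energy identity for $u_\varepsilon$ to the Cauchy estimate above is essentially formal, and the final identification of the continuous limit with $u$ follows from general uniqueness of limits in $L^2(0,T;H)$.
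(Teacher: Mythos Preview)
Your argument is correct and is essentially the classical mollification-plus-energy-identity proof from Temam's book; the paper itself does not prove this lemma but merely quotes it as a known result from \cite{temam}, so there is nothing further to compare.
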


\begin{proof} We now start to prove Theorem 2.2.
From the Theorem \ref{j1} and the estimates (\ref{md4c}),
(\ref{yq3}), (\ref{lm2a}) and (\ref{lm3a}), by the compactness Lemma
\ref{ct}, there exists $(\rho, d, u)$ such that
\begin{eqnarray}
(\rho^\delta, d^\delta,u^\delta)&\rightarrow& (\rho, d, u) \ \mathrm{in}\  L^2(0,T_3;L^r\times  H^2\times  W^{1,q}),\label{mdcl}\\
p^{\delta}& \xrightarrow[]{*}& \widehat{p} \,\,\, \mathrm{in}\
L^\infty(0,T_3; W^{1,6}), \quad \mathrm{as}\,\, \delta\rightarrow 0,
\label{yqcl}
\end{eqnarray}
where $\forall r\in (1,+\infty)$ and $ \forall q\in[2,+\infty)$. Hence $p=\widehat{p},\ a.e.$.

Because of the lower semi-continuity of various norms, the estimates (\ref{md4c}), (\ref{lm2a}) and (\ref{lm3a}) also hold for $(\rho,u,d)$. So for almost every $(t,x)( \in  [0,T_3]\times\Omega)$, $(\rho, u, d)$ satisfies the system (\ref{md2})-(\ref{dl2}) which means $(\rho, u,d)$ is a strong solution to the linear equation (\ref{md2})-(\ref{dl2}) with initial-boundary conditions (\ref{cz1})-(\ref{bz1}).

 The solution $(\rho,u,d)$ is unique: From the Lemma
\ref{lm1}, $\rho$ is the unique solution of the linear equation
(\ref{md2}). Using the same method as section \ref{unique}, we can
prove $d$ and $u$ are the unique solution to the linear equations
(\ref{yj2}) and (\ref{dl2}) respectively.

Finally, we will prove the time continuity of the solution $(\rho,
u,d)$. The solution from the Lemma \ref{lm1} is the same as from the
approximation (\ref{mdcl}) due to the uniqueness of solution. So we
get
\begin{eqnarray}
\rho\in C([0,T]; W^{1,6}).\label{rhoc1}
\end{eqnarray}
From the linear equation (\ref{md2}), we easily show
\begin{eqnarray}
\rho_t \in C([0,T_3];L^6).\label{rhoc2}
\end{eqnarray}
By the interpolation Lemma \ref{it}, (\ref{lm3a}) can deduce that
\begin{eqnarray}
 d_t\in L^2(0,T_3;H^2), \quad  d\in L^2(0,T_3;H^3)
\Rightarrow
d\in C([0,T_3];H^2).\label{dc1}
\end{eqnarray}
Differentiating the linear equation (\ref{yj2}) with respect to time and space, we get
\begin{eqnarray}
 &\bigtriangledown d_{tt}+\bigtriangledown(v\cdot \bigtriangledown d)_t=\nu (\bigtriangledown\bigtriangleup d_t-\frac{1}{\sigma^2}\bigtriangledown[(n+m)\cdot (d-m)n]_t).\nonumber
\end{eqnarray}
By the estimate (\ref{lm2a}), we deduce $\bigtriangledown d_{tt} \in L^2(0,T_3;H^{-1})$. Because $\bigtriangledown d_t \in L^2(0,T_3; H^1)$, by the interpolation Lemma \ref{it} again, we have
\begin{eqnarray}
 d_t \in C([0,T_3]; H^1_0).\label{dc2}
\end{eqnarray}
From the linear equation (\ref{yj2}), we get
\begin{eqnarray}
\bigtriangleup d \in C([0,T_3]; H^1).\label{dc3}
\end{eqnarray}
By the interpolation Lemma \ref{it}, (\ref{lm3a}) can deduce that
\begin{eqnarray}
 u_t\in L^2(0,T_3;H^1), \quad u\in L^2(0,T_3;W^{2,6})
\Rightarrow
u\in C([0,T_3];H^1_0).\label{uc1}
\end{eqnarray}
From the linear equation (\ref{dl2}) and the estimates (\ref{md4c}), (\ref{lm2a}) and (\ref{lm3a}), we obtain
\begin{eqnarray}
(\rho u_t, (\rho u_t)_t)\in L^2(0,T_3;H^{1})\times L^2(0,T_3;H^{-1})\Rightarrow\rho u_t \in C([0,T_3]; L^2).\label{uc2}
\end{eqnarray}
From the linear equation (\ref{dl2}) and the elliptic regularity
estimate $|\bigtriangledown^2 u|_{L^2}\leq C|\bigtriangleup
u|_{L^2}$, we have
\begin{eqnarray}
u\in C([0,T_3]; H^2).\label{uc3}
\end{eqnarray}
So we get  time-continuity of $(\rho,u,d)$ from
(\ref{rhoc1})-(\ref{uc3}).
\end{proof}


\section{Iteration and  existence in Theorem 1}\label{It}
Set
\begin{eqnarray}
\begin{array}{cccc}
c_1= Cc_0^{18}M^2(c_0),&c_2=c_1^7,&T_3= \min\{c_2^{-22},T\}.
\end{array}\label{cs3}
\end{eqnarray}
At the beginning, let's choose a initial data of iteration
$(u^0(t,x),d^0(t,x))$. $u^0(t,x)$ satisfies the following heat
equation
\begin{eqnarray}
\phi_t-\bigtriangleup \phi=0,\  \mathrm{with}\  \phi|_{t=0}=u_0,\ \phi|_{\partial \Omega}=0,\nonumber
\end{eqnarray}
and $d^0(t,x)=d_0(x)$. Because $(c_1, c_2,T_3)$ depends only on $c_0$, we can choose a $T_*(\in[0,T_3])$ so small that (\ref{cs1})-(\ref{cs2}) hold for $u^0$ and $d^0$ with $T_*$ instead of $T$.

Replacing $(v,n)$ by $(u^0,d^0)$ and using the Theorem \ref{tma}, we
can obtain  the solution $(\rho^1,u^1,d^1)$ of
(\ref{md2})-(\ref{dl2}) with (\ref{cz1})-(\ref{bz1}), and it
satisfies the estimates (\ref{md4c}), (\ref{lm2a}) and (\ref{lm3a}).
Inductively, for all $k\in N^{+}$, replacing $(v,n)$ by
$(u^{k-1},d^{k-1})$ and using the Theorem \ref{tma}, we can obtain a
sequence $(\rho^k,u^k,d^k)$ of solution of (\ref{md2})-(\ref{dl2})
with (\ref{cz1})-(\ref{bz1}), and they satisfy the following
estimates with the same $(c_0,c_1,c_2,T_*)$ independent of $k\in
N^+$:
\begin{eqnarray}
&&\sup_{0\leq t\leq T_*}(\|u^k\|_{H^1_0}+\|d^k\|_{H^1}+\|d^k_t\|_{H^1_0}+c_1^{-6}
(\|\bigtriangledown ^2 u^k\|_{L^2}+\|\bigtriangledown ^2 d^k\|_{L^2}))\nonumber\\
&&+\int_0^{T_*}\|\bigtriangledown u^k_t\|^2_{L^2}+\|u^k\|^2_{W^{2,6}}+\|\bigtriangledown^2 d_t^k\|^2_{L^2}+\|d^k\|^2_{H^3}\mathrm{d}t\nonumber\\
&& \leq Cc_0^{22}M^2(c_0)\label{gj2}
\end{eqnarray}
and
\begin{align}
\sup_{0\leq t\leq T_*}(\|\rho^k \|_{W^{1,6}}+\|\rho^k_t\|_{L^6})\leq Cc_2c_0,&\sup_{0\leq t\leq T_*}\|\sqrt{\rho^k} u^k_t\|_{L^2}\leq CM(c_0)c_0^{5},\label{gj3}\\
\sup_{0\leq t\leq T_*}(\|p^k\|_{W^{1,6}}+\| p^k_t\|_{L^6}) \leq CM(c_0)c_2c_0,&\sup_{0\leq t\leq T_*}\|\bigtriangledown d^k\|_{H^2}\leq Cc_2^2c_1c_0^3.
\label{gj5}
\end{align}

We will show $(\rho^k, u^k, d^k)$ converges to a strong solution to
the original nonlinear problem (\ref{md1})-(\ref{yj1}).

Define
\begin{eqnarray}
\begin{array}{ccc}
\overline{\rho}^{k+1}=\rho^{k+1}-\rho^{k},&\overline{d}^{k+1}=d^{k+1}-d^{k},&\overline{u}^{k+1}
=u^{k+1}-u^{k}.
\end{array}\label{sc1}
\end{eqnarray}

Since $(\rho^k, u^k, d^k)$ and $(\rho^{k+1}, u^{k+1}, d^{k+1})$
satisfy the linear equations (\ref{md2})-(\ref{dl2}), we have
\begin{align}
&\overline{\rho}^{k+1}_t+\mathrm{div}(\overline{\rho}^{k+1}u^k)+\mathrm{div}(\rho^k \overline{u}^k)=0,\label{md3}\\
&\overline{d}^{k+1}_t-\nu \bigtriangleup \overline{d}^{k+1}=-\overline{u}^k\cdot \bigtriangledown d^{k+1}-u^{k-1}\cdot \bigtriangledown \overline{d}^{k+1}-\frac{\nu}{\sigma^2}((d^k+m)(d^{k+1}-m))\overline{d}^{k}\nonumber\\
& -\frac{\nu}{\sigma^2}((d^k+m)\cdot
\overline{d}^{k+1})d^{k-1}-\frac{\nu}{\sigma^2}(\overline{d}^k(d^{k}-m))d^{k-1},\label{yj3}\\
&\rho^{k+1}\overline{u}^{k+1}_t+\rho^{k+1}u^k\cdot \bigtriangledown\overline{ u}^{k+1}-\mu \bigtriangleup  \overline{u}^{k+1}+\bigtriangledown (p^{k+1}-p^k)\nonumber\\
&=-\lambda (\bigtriangledown \overline{d}^{k+1})^T(\bigtriangleup d^{k+1}-f(d^{k+1}))-\lambda(\bigtriangledown d^k)^T(\bigtriangleup\overline{d}^{k+1}-(f(d^{k+1})-f(d^k)))\nonumber\\
&\ \ \ -\overline{\rho}^{k+1}(u^{k-1}\cdot \bigtriangledown u^k+u_t^k)-\rho^{k+1} \overline{u}^k\cdot \bigtriangledown u^k.\label{dl3}
\end{align}

Define
\begin{eqnarray}
\Psi^{k+1}=\|\overline{\rho}^{k+1}\|^2_{L^2}+\|\overline{d}^{k+1}\|^2_{L^2}+\|\bigtriangledown\overline{d}^{k+1}\|^2_{L^2}
+\|\sqrt{\rho^{k+1}}\overline{u}^{k+1}\|^2_{L^2} .\label{sc2}
\end{eqnarray}

Before estimates, we introduce two small positive unfixed constants $\eta$ and $\epsilon$.

From the first equation (\ref{md3}), we can derive
\begin{eqnarray}
\frac{\mathrm{d}}{\mathrm{d}t}\|\overline{\rho}^{k+1}\|^2_{L^2}\leq \mathbb{A}_\eta^k (t)\|\overline{\rho}^{k+1}\|^2_{L^2}+\eta \|\bigtriangledown \overline{u}^k\|^2_{L^2},\label{scmd4}
\end{eqnarray}
where
 \begin{eqnarray}
 &&\mathbb{A}_\eta^k (t)=C\|\bigtriangledown u^k(t)\|_{W^{1,6}}+\eta ^{-1}C (\|\bigtriangledown \rho ^k(t)\|^2_{L^3}+ \|\rho ^k(t)\|^2_{L^\infty}).\nonumber
 \end{eqnarray}
Using the uniform estimates (\ref{gj2})-(\ref{gj5}), we obtain
 \begin{eqnarray}
 &&\int _0^t\mathbb{A}_\eta^k (s)\mathrm{d}s\leq C+\frac{C}{\eta}t, \qquad\forall t\in[0,T_*]. \label{scmd44}
 \end{eqnarray}

Multiplying (\ref{yj3}) by $ \overline{d}^{k+1} $ and integrating it
over $\Omega$, we have
\begin{align}
&\frac{\mathrm{d}}{\mathrm{d}t}\int_\Omega |\overline{d}^{k+1}|^2\mathrm{d}x+ \int_\Omega|\bigtriangledown\overline{d}^{k+1}|^2\mathrm{d}x\nonumber\\
\leq&\mathbb{B}_\eta^k(t)\|\overline{d}^{k+1}\|^2_{L^2}+\eta (\|\bigtriangledown\overline{u}^k\|_{L^2}^2+ \|\bigtriangledown\overline{d}^k\|_{L^2}^2),\label{yj4}
\end{align}
where for all $t\in[0,T_*],$
 \begin{eqnarray}
& \mathbb{B}_\eta^k (t)
&= C\eta^{-1}\|\bigtriangledown d^{k+1}\|^2_{L^2}+C\|u^{k-1}\|^2_{L^\infty}+C\eta^{-1}\|d^k+m\|^2_{L^6}\|d^k-m\|^2_{L^6}\nonumber\\
&&+C\|d^k+m\|_{L^\infty}\|d^{k-1}\|_{L^\infty}
 +C\eta^{-1}\|d^{k-1}\|^2_{L^6}\|d^k-m\|^2_{L^6}. \nonumber
\end{eqnarray}
The uniform estimates (\ref{gj2})-(\ref{gj5}) implies
 \begin{eqnarray}
&&\int _0^t\mathbb{B}_\eta^k (s)\mathrm{d}s \leq C(1+\frac{1}{\eta})t .\qquad\label{yj44}
 \end{eqnarray}

Multiplying (\ref{yj3}) by $\bigtriangleup \overline{d}^{k+1} $, integrating it over $\Omega$ and using the elliptic estimate $\|\bigtriangledown^2 \overline{d}^{k+1}\|_{L^2}\leq C\|\bigtriangleup \overline{d}^{k+1}\|_{L^2}$ , we can deduce by integration by parts
 \begin{eqnarray}
 &&\frac{\mathrm{d}}{\mathrm{d}t}\int_\Omega |\bigtriangledown\overline{d}^{k+1}|^2\mathrm{d}x+\int_\Omega|\bigtriangledown^2 \overline{d}^{k+1}|^2\mathrm{d}x
 \leq C \sum_{i=1}^{13}L_i,\label{scyj1a}
 \end{eqnarray}
 where
 \begin{eqnarray*}
 L_1&=&\|\bigtriangledown  \overline{u}^k\|_{L^2}\|\bigtriangledown d^{k+1}\|_{L^\infty}\|\bigtriangledown \overline{d}^{k+1}\|_{L^2}\\
 &\leq& C\eta^{-1}\|\bigtriangledown d^{k+1}\|^2_{H^2}\|\bigtriangledown\overline{d}^{k+1}\|_{L^2}^2
  +\eta\|\bigtriangledown\overline{u}^k\|^2_{L^2},\\
  L_2&=&\| \overline{u}^k \|_{L^6}\|\bigtriangledown^2  d^{k+1} \|_{L^3}\| \bigtriangledown  \overline{d}^{k+1}\|_{L^2}\\
  &\leq&C\eta^{-1}\|\bigtriangledown^2   d^{k+1} \|^2_{H^1}\|\bigtriangledown\overline{d}^{k+1}\|_{L^2}^2
   +\eta\|\bigtriangledown\overline{u}^k\|^2_{L^2},\\
  L_3&=&\|\bigtriangledown  u^{k-1}\|_{W^{1,6}}\|\bigtriangledown  \overline{d}^{k+1}\|_{L^2}^2,\\
  L_4&=&\|u^{k-1}\|_{L^\infty}\|\bigtriangledown ^2  \overline{d}^{k+1} \|_{L^2}\| \bigtriangledown  \overline{d}^{k+1}\|_{L^2}\\
     &\leq&C\epsilon^{-1}\|u^{k-1}\|^2_{H^2}\|\bigtriangledown  \overline{d}^{k+1}\|_{L^2}^2+\epsilon\|\bigtriangledown ^2  \overline{d}^{k+1} \|^2_{L^2},\quad\qquad\qquad
\end{eqnarray*}
\begin{eqnarray*}
  L_5&=&\|\bigtriangledown d^k\|_{L^3}\|d^{k+1}-m\|_{L^\infty}\|\overline{d}^k\|_{L^6}\| \bigtriangledown  \overline{d}^{k+1}\|_{L^2}\\
     &\leq&C\eta^{-1}\|\bigtriangledown d^k\|^2_{H^1}\|d^{k+1}-m\|^2_{H^2} \|\bigtriangledown  \overline{d}^{k+1}\|_{L^2}^2+\eta\|\bigtriangledown\overline{d}^k\|^2_{L^2},\\
  L_6&=&\| d^k\|_{L^\infty}\|\bigtriangledown(d^{k+1}-m)\|_{L^3}\|\overline{d}^k\|_{L^6}\| \bigtriangledown  \overline{d}^{k+1}\|_{L^2}\\
     &\leq&C\eta^{-1}\|d^k\|^2_{H^2}\|\bigtriangledown(d^{k+1}-m)\|^2_{H^1} \|\bigtriangledown  \overline{d}^{k+1}\|_{L^2}^2+\eta\|\bigtriangledown\overline{d}^k\|^2_{L^2},\\
  L_7&=&\| d^k\|_{L^\infty}\|d^{k+1}-m\|_{L^\infty}\|\bigtriangledown\overline{d}^k\|_{L^2}\| \bigtriangledown  \overline{d}^{k+1}\|_{L^2}\\
     &\leq& C\eta^{-1}\|d^k\|^2_{H^2}\|d^{k+1}-m\|^2_{H^2} \|\bigtriangledown  \overline{d}^{k+1}\|_{L^2}^2+\eta\|\bigtriangledown\overline{d}^k\|^2_{L^2},
 \end{eqnarray*}
 \begin{eqnarray*}
 L_8&=&\|\bigtriangledown (d^k+m)\|_{L^3}\|\overline{d}^{k+1}\|_{L^6}\|d^{k-1}\|_{L^\infty}\|\bigtriangledown \overline{d}^{k+1}\|_{L^2}\qquad\qquad\\
    &\leq& C\|\bigtriangledown (d^k+m)\|^2_{H^1}\|d^{k-1}\|_{H^2}\|\bigtriangledown \overline{d}^{k+1}\|^2_{L^2},\\
L_9&=&\|d^k+m\|_{H^2}\|d^{k-1}\|_{H^2}\|\bigtriangledown \overline{d}^{k+1}\|^2_{L^2},\\
L_{10}&=&\|d^k+m\|_{L^\infty}\|\overline{d}^{k+1}\|_{L^6}\|\bigtriangledown d^{k-1}\|_{L^3}\|\bigtriangledown \overline{d}^{k+1}\|_{L^2}\\
    &\leq& C\|d^k+m\|^2_{H^2}\|\bigtriangledown d^{k-1}\|_{H^1}\|\bigtriangledown \overline{d}^{k+1}\|^2_{L^2},
 \end{eqnarray*}
 \begin{eqnarray*}
L_{11}&=&\|\bigtriangledown \overline{d}^k\|_{L^2}\|d^{k}-m\|_{L^\infty}\|d^{k-1}\|_{L^\infty}\| \bigtriangledown  \overline{d}^{k+1}\|_{L^2}\\
     &\leq&C\eta^{-1}\|d^k-m\|^2_{H^2}\|d^{k-1}\|^2_{H^2} \|\bigtriangledown  \overline{d}^{k+1}\|_{L^2}^2+\eta\|\bigtriangledown\overline{d}^k\|^2_{L^2},\\
 L_{12}&=&\| \overline{d}^k\|_{L^6}\|\bigtriangledown(d^{k}-m)\|_{L^3}\|d^{k-1}\|_{L^\infty}\| \bigtriangledown  \overline{d}^{k+1}\|_{L^2}\\
     &\leq&C\eta^{-1}\|\bigtriangledown(d^k-m)\|^2_{H^1}\|d^{k-1}\|^2_{H^2} \|\bigtriangledown  \overline{d}^{k+1}\|_{L^2}^2+\eta\|\bigtriangledown\overline{d}^k\|^2_{L^2},\\
 L_{13}&=&\| \overline{d}^k\|_{L^6}\|d^{k+1}-m\|_{L^\infty}\|\bigtriangledown d^{k-1}\|_{L^3}\| \bigtriangledown  \overline{d}^{k+1}\|_{L^2}\\
     &\leq&C\eta^{-1}\|d^k-m\|^2_{H^2}\|\bigtriangledown d^{k-1}\|^2_{H^1} \|\bigtriangledown  \overline{d}^{k+1}\|_{L^2}^2+\eta\|\bigtriangledown\overline{d}^k\|^2_{L^2}.
\end{eqnarray*}
Let's $\epsilon$ small enough, the inequality (\ref{scyj1a}) becomes
  \begin{eqnarray}
 &&\frac{\mathrm{d}}{\mathrm{d}t}\int_\Omega |\bigtriangledown \overline{d}^{k+1}|^2\mathrm{d}x+ \int_\Omega|\bigtriangledown^2 \overline{d}^{k+1}|^2\mathrm{d}x\nonumber\\
  &\leq&C \mathbb{C}_\eta^k(t)|\bigtriangledown\overline{d}^{k+1}|_{L^2}^2
  +C\eta(|\bigtriangledown\overline{d}^k|^2_{L^2}+|\bigtriangledown\overline{u}^k|^2_{L^2}),\label{sca}
  \end{eqnarray}
  where
  \begin{eqnarray}
\mathbb{C}_\eta^k(t)&=&\eta^{-1}|\bigtriangledown d^{k+1}|^2_{H^2} +|\bigtriangledown  u^{k-1}|_{W^{1,6}}+|u^{k-1}|^2_{H^2}+\eta^{-1}|\bigtriangledown d^k|^2_{H^1}|d^{k+1}-m|^2_{H^2}\nonumber\\
 &&+\eta^{-1}|d^k|^2_{H^2}|\bigtriangledown(d^{k+1}-m)|^2_{H^1}+\eta^{-1}|d^k|^2_{H^2}|d^{k+1}-m|^2_{H^2}\nonumber\\
 &&+|\bigtriangledown (d^k+m)|^2_{H^1}|d^{k-1}|_{H^2}+|d^k+m|_{H^2}|d^{k-1}|_{H^2}\nonumber\\
 &&+|d^k+m|^2_{H^2}|\bigtriangledown d^{k-1}|_{H^1}+\eta^{-1}|\bigtriangledown(d^k-m)|^2_{H^1}|d^{k-1}|^2_{H^2}\nonumber\\
 &&+\eta^{-1}|\bigtriangledown(d^k-m)|^2_{H^1}|d^{k-1}|^2_{H^2}+\eta^{-1}|d^k-m|^2_{H^2}|\bigtriangledown d^{k-1}|^2_{H^1}.\nonumber
 \end{eqnarray}
The uniform estimates (\ref{gj2})-(\ref{gj5}) implies
  \begin{eqnarray}
\int _0^t\mathbb{C}_\eta^k (s)\mathrm{d}s\quad\leq C+(C+\frac{C}{\eta})t, \qquad\forall t\in[0,T_*]. \label{yj55}
   \end{eqnarray}

Multiplying the movement equation (\ref{dl3}) by $\overline{u}^{k+1}$ and integrating over $\Omega$, we can deduce
\begin{align}
&\frac{1}{2}\frac{\mathrm{d}}{\mathrm{d}t}\int_\Omega \rho^{k+1}|\overline{u}^{k+1}|^2\mathrm{d}x+\mu\int_\Omega| \bigtriangledown  \overline{u}^{k+1}|^2\mathrm{d}x\nonumber\\
\leq&\int_\Omega|\overline{\rho}^{k+1}|(|u^{k-1}\cdot \bigtriangledown u^k|+|u_t^k|)|\overline{u}^{k+1}|\mathrm{d}x+\int_\Omega|\rho^{k+1}||\overline{u}^k||\bigtriangledown u^k||\overline{u}^{k+1}|\mathrm{d}x\nonumber\\
&+\lambda \int_\Omega|\bigtriangledown \overline{d}^{k+1}||\bigtriangleup d^{k+1}-f(d^{k+1})||\overline{u}^{k+1}|\mathrm{d}x+\lambda£¨\int_\Omega|\bigtriangledown^2 d^k||\bigtriangledown\overline{d}^{k+1} || \overline{u}^{k+1}|\mathrm{d}x\nonumber\\
&+\lambda\int_\Omega|\bigtriangledown d^k||\bigtriangledown\overline{d}^{k+1}||\bigtriangledown \overline{u}^{k+1}|\mathrm{d}x£©+\frac{1}{\sigma^2}\int_\Omega |\bigtriangledown d^k||d^{k+1}+d^k||\overline{d}^{k+1}||d^{k+1}||\overline{u}^{k+1}|\mathrm{d}x \nonumber\\
&+\frac{1}{\sigma^2}\int_\Omega |\bigtriangledown d^k||d^k+m||d^k-m||\overline{d}^{k+1}||\overline{u}^{k+1}|\mathrm{d}x+\int_\Omega| p^{k+1}-p^k||\bigtriangledown \overline{u}^{k+1}|\mathrm{d}x  \nonumber\\
=&\sum_{i=1}^8 M_i,\label{sc3}
\end{align}
where we have used
\begin{eqnarray*}
 f(d^{k+1})-f(d^k)=\frac{1}{\sigma^2}(\overline{d}^{k+1}\cdot(d^{k+1}+d^k)d^{k+1}+(|d^k|^2-1)\overline{d}^{k+1}).
\end{eqnarray*}
Here
\begin{align}
M_1\leq&\|\overline{\rho}^{k+1}\|_{L^2}\|\overline{u}^{k+1}\|_{L^6}(\|u^{k-1}\|_{L^6}\|\bigtriangledown u^k\|_{L^6}+\|u_t^k\|_{L^3})\nonumber\\
\leq&C\|\overline{\rho}^{k+1}\|^2_{L^2}(\|u^{k-1}\|^2_{H^1}\|u^k\|^2_{W^{2,6}}+\|u_t^k\|_{L^2}\|\bigtriangledown u_t^k\|_{L^2})+\frac{\mu}{9}\|\bigtriangledown\overline{u}^{k+1}\|^2_{L^2},\qquad\quad\nonumber\\
M_2\leq&\|\sqrt{\rho^{k+1}}\|_{L^6}\|\overline{u}^k\|_{L^6}\|\bigtriangledown u^k\|_{L^6}\|\sqrt{\rho^{k+1}}\overline{u}^{k+1}\|_{L^2}\nonumber\\
\leq& \eta^{-1}\|\sqrt{\rho^{k+1}}\|^2_{L^6}\| u^k\|^2_{H^2}\|\sqrt{\rho^{k+1}}\overline{u}^{k+1}\|^2_{L^2}+\eta\|\bigtriangledown\overline{u}^k\|^2_{L^2},\nonumber\\
M_3\leq&C\|\bigtriangledown \overline{d}^{k+1}\|_{L^2}\|\bigtriangleup d^{k+1}-f(d^{k+1})\|_{L^3}\|\overline{u}^{k+1}\|_{L^6}\nonumber\\
\leq&\frac{\mu}{9}\|\bigtriangledown\overline{u}^{k+1}\|^2_{L^2}+ C\|\bigtriangledown \overline{d}^{k+1}\|^2_{L^2}(\|\bigtriangledown^2d^{k+1}\|_{L^2}\|\bigtriangledown^2d^{k+1}\|_{L^6}\nonumber\\
&+\|d^{k+1}+m\|^2_{H^1}\|d^{k+1}-m\|^2_{H^1}\|d^{k+1}\|^2_{H^2}),\nonumber
\end{align}
\begin{align}
M_4\leq&C\|\bigtriangledown^2 d^k\|_{L^3}\|\bigtriangledown\overline{d}^{k+1} \|_{L^2}\| \overline{u}^{k+1}\|_{L^6},\nonumber\\
\leq&C\|\bigtriangledown^2 d^k\|_{L^6}\|\bigtriangledown^2 d^k\|_{L^2} \|\bigtriangledown\overline{d}^{k+1} \|^2_{L^2}+\frac{\mu}{9}\|\bigtriangledown\overline{u}^{k+1}\|^2_{L^2} \nonumber\\
M_5\leq&C\|\bigtriangledown d^k\|_{L^\infty}\|\bigtriangledown\overline{d}^{k+1}\|_{L^2}\|\bigtriangledown \overline{u}^{k+1}\|_{L^2}\nonumber\\
\leq&C\|\bigtriangledown d^k\|_{W^{1,6}}^2\|\bigtriangledown\overline{d}^{k+1} \|^2_{L^2}+\frac{\mu}{9}\|\bigtriangledown\overline{u}^{k+1}\|^2_{L^2},\nonumber\\
M_6\leq& C\|\bigtriangledown d^k\|_{L^3}\|d^{k+1}+d^k\|_{L^6}\|\overline{d}^{k+1}\|_{L^6}\|d^{k+1}\|_{L^6}\|\overline{u}^{k+1}\|_{L^6} \nonumber\\
 \leq&C\|\bigtriangledown d^k\|_{L^2}\|\bigtriangledown d^k\|_{H^1}\|d^{k+1}+d^k\|^2_{H^1}\|d^{k+1}\|^2_{H^1}\|\bigtriangledown\overline{d}^{k+1} \|^2_{L^2}\qquad\quad\qquad\quad\quad\nonumber\\
 &+\frac{\mu}{9}\|\bigtriangledown\overline{u}^{k+1}\|^2_{L^2},\nonumber
\end{align}
\begin{align}
M_7\leq&C\|\bigtriangledown d^k\|_{L^3}\|d^k+m\|_{L^6}\|d^k-m\|_{L^6}\|\overline{d}^{k+1}\|_{L^6}\|\overline{u}^{k+1}\|_{L^6}\nonumber\\
\leq& C\|\bigtriangledown d^k\|_{L^2}\|\bigtriangledown d^k\|_{H^1}\|d^k+m\|_{H^1}^2\|d^k-m\|_{H^1}^2\|\bigtriangledown\overline{d}^{k+1} \|^2_{L^2}\qquad\quad\qquad\quad\quad\nonumber\\
&+\frac{\mu}{9}\|\bigtriangledown\overline{u}^{k+1}\|^2_{L^2},\nonumber\\
M_8\leq&\| p^{k+1}-p^k\|_{L^2}\|\bigtriangledown \overline{u}^{k+1}\|_{L^2}  \nonumber\\
\leq& CM^2(c_0)\|\overline{\rho}^{k+1}\|^2_{L^2}
+\frac{\mu}{9}\|\bigtriangledown\overline{u}^{k+1}\|^2_{L^2}.\nonumber
\end{align}
Taking $M_1-M_8$ into (\ref{sc3}), we obtain
\begin{align}
&\frac{\mathrm{d}}{\mathrm{d}t}\int_\Omega \rho^{k+1}|\overline{u}^{k+1}|^2\mathrm{d}x+\int_\Omega| \bigtriangledown  \overline{u}^{k+1}|^2\mathrm{d}x\nonumber\\
\leq&C\mathbb{D}^k_\eta(t)(\|\overline{\rho}^{k+1}\|^2_{L^2}+\|\sqrt{\rho^{k+1}}\overline{u}^{k+1}\|^2_{L^2}+
\|\bigtriangledown \overline{d}^{k+1}\|^2_{L^2})
+C\eta\|\bigtriangledown\overline{u}^k\|^2_{L^2},\label{sc4}
\end{align}
where
\begin{align}
\mathbb{D}^k_\eta(t)=&\|u^{k-1}\|^2_{H^1}\|u^k\|^2_{W^{2,6}}+\|u_t^k\|_{L^2}\|\bigtriangledown u_t^k\|_{L^2}
+\eta^{-1}\|\sqrt{\rho^{k+1}}\|^2_{L^6}\| u^k\|^2_{H^2}\nonumber\\
&+\|\bigtriangledown^2d^{k+1}\|_{L^2}\|\bigtriangledown^2d^{k+1}\|_{L^6}+\|d^{k+1}+m\|^2_{H^1}\|d^{k+1}-m\|^2_{H^1}\|d^{k+1}\|^2_{H^2}\nonumber\\
&+\|\bigtriangledown^2d^{k}\|_{L^6}\|\bigtriangledown^2d^{k}\|_{L^2}+\|\bigtriangledown d^k\|^2_{W^{1,6}}+\|\bigtriangledown d^k\|_{L^2}\|\bigtriangledown d^k\|_{H^1}\nonumber\\
&\cdot(\|d^{k+1}+d^k\|^2_{H^1}\|d^{k+1}\|^2_{H^1}+\| d^k+m\|_{H^1}^2\| d^k-m\|_{H^1}^2)+M^2(c_0).\nonumber
 \end{align}
From the uniform estimates (\ref{gj2})-(\ref{gj5}), we have
  \begin{eqnarray}
&&\int _0^t\mathbb{D}_\eta^k (s)\mathrm{d}s\leq C+(C+\frac{C}{\eta})t, \qquad\forall t\in[0,T_*].\label{sc55}
\end{eqnarray}

Summing (\ref{scmd4}), (\ref{yj4}), (\ref{sca}) and (\ref{sc4}), we obtain
\begin{eqnarray}
&&\frac{\mathrm{d}}{\mathrm{d}t}\Psi^{k+1}+( \|\bigtriangledown\overline{d}^{k+1}\|^2_{L^2}+  \|\bigtriangledown^2\overline{d}^{k+1}\|_{L^2}^2+ \| \bigtriangledown  \overline{u}^{k+1}\|_{L^2}^2)                        \nonumber\\
&\leq&C\mathbb{E}_\eta^k(t)\Psi^{k+1} +C\eta( \|\bigtriangledown \overline{u}^k\|^2_{L^2}+ \|\bigtriangledown\overline{d}^k\|_{L^2}^2 ),\label{sc6a}
\end{eqnarray}
where
\begin{eqnarray}
\mathbb{E}_\eta^k(t)= \mathbb{A}_\eta^k (t)+ \mathbb{B}_\eta^k(t)+ \mathbb{C}_\eta^k(t) +\mathbb{D}^k_\eta(t).\nonumber
 \end{eqnarray}
Using the uniform estimates (\ref{gj2})-(\ref{gj5}), we obtain again
  \begin{eqnarray}
&&\int_0^t \mathbb{E}_\eta^k(s)\mathrm{d}s\leq C +C(1+\eta^{-1})t ,\qquad \forall t\in [0,T_*].
\end{eqnarray}
Applying Gronwall's inequality to (\ref{sc6a}), we can deduce
\begin{eqnarray}
&&\Psi^{k+1}(t)+\int _0^t( \|\bigtriangledown\overline{d}^{k+1}\|^2_{L^2}+  \|\bigtriangledown^2\overline{d}^{k+1}\|_{L^2}^2+ \| \bigtriangledown  \overline{u}^{k+1}\|_{L^2}^2) \mathrm{d}s                       \nonumber\\
&\leq&C\eta\int_0^t( \|\bigtriangledown \overline{u}^k\|^2_{L^2}+ \|\bigtriangledown\overline{d}^k\|_{L^2}^2 )\mathrm{d}\tau\exp(C+C(1+\eta^{-1})t).\label{sc6}
\end{eqnarray}
Hence choose small constants $\eta,\ T^*(<T_*)$, so that
\begin{eqnarray}
C\eta\exp (C+C(1+\eta^{-1})t)\leq \frac{1}{2},\qquad \forall t \in [0,T^*].\label{sc7}
\end{eqnarray}
We easily deduce that
\begin{eqnarray}
&&\sum_{k=1}^{\infty}\sup_{0\leq t\leq T^*}\Psi^{k+1}(t)+\sum_{k=1}^{\infty}\int _0^{T^*}( \|\bigtriangledown\overline{d}^{k+1}\|^2_{L^2}+  \|\bigtriangledown^2\overline{d}^{k+1}\|_{L^2}^2+ \| \bigtriangledown  \overline{u}^{k+1}\|_{L^2}^2) \mathrm{d}s \nonumber\\
&&\leq C<\infty.\label{sc8}
\end{eqnarray}
 (\ref{sc8}) implies that the full sequence $(\rho^k, d^k,u^k)$ converges to a limit $(\rho, d,u)$ in the following strong sense
\begin{eqnarray}
\begin{array}{ll}
\rho^k \rightarrow \rho \ \mathrm{in} \ L^\infty(0,T^*; L^2),&
 u^k\rightarrow u\ \mathrm{in}\ L^2(0,T^*;H^1_0),\\
d^k\rightarrow d\ \mathrm{in}\ L^\infty(0,T^*; H^1)\cap L^2(0,T^*;H^2).&
\end{array} \label{sc9}
\end{eqnarray}
Hence the problem (\ref{md1})-(\ref{yj1}) with initial boundary data
(\ref{cz1})-(\ref{bz1}) has a weak solution $(\rho,u,d)$.
Furthermore using the estimates (\ref{gj2})-(\ref{gj5}), we obtain
that a subsequence of $(\rho^{k}, u^{k},d^{k})$ converges to
$(\rho,u,d)$ in an obvious weak or weak* sense. Due to the lower
semi-continuity of various norms, from (\ref{gj2})-(\ref{gj5}),
$(\rho,u,d)$ also satisfies the following regularity estimates
 \begin{eqnarray}
&&\sup_{0\leq t\leq T^*}(\|\rho \|_{W^{1,6}}+\|\rho_t\|_{L^6}+\|u\|_{H^1_0}+\|p\|_{W^{1,6}}+\| p_t\|_{L^6} +\|d\|_{H^1}+\|d_t\|_{H^1_0}\nonumber\\
&&\qquad\ \ \ \ +\|\bigtriangledown ^2 u\|_{L^2}+\|\bigtriangledown ^2 d\|_{L^2}+\|\bigtriangledown d\|_{H^2})\nonumber\\
&&+\int_0^{T^*}\|\sqrt{\rho} u_t\|^2+\|\bigtriangledown u_t\|^2_{L^2}+\|u\|^2_{W^{2,6}}+\|\bigtriangledown^2 d_t\|^2_{L^2}+\|d\|^2_{H^3}\mathrm{d}t\nonumber\\
&& \leq C.\label{cl2}
\end{eqnarray}
Hence $(\rho,u,d)$ is also a strong solution to the problem
(\ref{md1})-(\ref{yj1}).
\section{Uniqueness and continuity in Theorem 1}\label{unique}
In this section, we will use energy method to prove the uniqueness
and continuity in Theorem 1. For simplicity, we introduce some
notations
\begin{eqnarray}
\begin{array}{lll}
\overline{\rho}=\rho-\widetilde{\rho},&\overline{u}=u-\widetilde{u},&\overline{d}=d-\widetilde{d}.
\end{array}\nonumber
\end{eqnarray}
Define
\begin{eqnarray}
\Psi(t)=\|\overline{\rho}\|^2_{L^2}+\|\overline{d}\|^2_{L^2}+\|\bigtriangledown\overline{d}\|^2_{L^2}+\|\sqrt{\rho}\overline{u}\|^2_{L^2} .\nonumber
\end{eqnarray}
Using the similar process in the section \ref{It}, we can obtain the
following estimate (see (\ref{sc6}))
\begin{eqnarray}
\frac{\mathrm{d}}{\mathrm{d}t}\Psi+( \|\bigtriangledown\overline{d}\|^2_{L^2}+  \|\bigtriangledown^2\overline{d}\|_{L^2}^2+ \| \bigtriangledown  \overline{u}\|_{L^2}^2)\leq C\mathbb{F}(t)\Psi,\label{unq1a}
\end{eqnarray}
where $\mathbb{F}(t)\in L^1(0,T^*)$.\\
Applying the Gronwall's inequality to (\ref{unq1a}), we get for all $t\in [0,T^*],$
\begin{eqnarray}
\Psi(t)+\int_0^{t}(   \|\bigtriangledown^2\overline{d}\|_{L^2}^2+ \| \bigtriangledown  \overline{u}\|_{L^2}^2)\mathrm{d}\tau\leq C\Psi(0),\label{unq1b}
\end{eqnarray}
which implies the uniqueness, and for all $ t\in [0,T^*],$
we have
\begin{eqnarray}
(\|\overline{\rho}\|^2_{L^2}+\|\overline{d}\|^2_{H^1}
+\|\sqrt{\rho}\overline{u}\|^2_{L^2})(t)+\int_0^{t}(   \|\bigtriangledown^2\overline{d}\|_{L^2}^2+ \| \bigtriangledown  \overline{u}\|_{L^2}^2)\mathrm{d}\tau\rightarrow 0\label{unq1e}
\end{eqnarray}
 as $(\widetilde{\rho}_0,\widetilde{u}_0,\widetilde{d}_0)\rightarrow(\rho_0,u_0,d_0)$ in $W^{1,6}\times H^2\times H^3$.

Because $d$ and $\widetilde{d}$ satisfy (\ref{yj1}), we obtain,
similar to (\ref{yj3}),
\begin{eqnarray}
&&\frac{\mathrm{d}}{\mathrm{d}t}\int_\Omega |\bigtriangleup \overline{d}|^2\mathrm{d}x+\int_\Omega |\bigtriangledown \overline{d_t}|^2\mathrm{d}x\nonumber\\
&\leq& C(\|\bigtriangledown \overline{u}\|_{L^2}^2+\|\bigtriangledown\overline{d}\|^2_{L^2}\|\bigtriangledown\widetilde{u}\|^2_{L^6}+\|\bigtriangledown \overline{d}\|_{L^2}^2)+\|\widetilde{u}\|_{L^\infty}^2\|\bigtriangleup \overline{d}\|^2_{L^2}.\nonumber
\end{eqnarray}
Applying Gronwall's inequality to the above inequality,  and using
the inequality (\ref{unq1e}) and the elliptic estimate
$\|\overline{d}\|_{H^2}\leq C \|\bigtriangleup \overline{d}\|_{L^2}
$, we have, for all $ t\in[0,T^*],$
\begin{eqnarray}
\|\overline{d}\|_{H^2}(t)+\int_0^{t} \|\bigtriangledown \overline{d_t}\|^2_{L^2}\mathrm{d}\tau\rightarrow 0\label{unq1f}
\end{eqnarray}
 as $(\widetilde{\rho}_0,\widetilde{u}_0,\widetilde{d}_0)\rightarrow(\rho_0,u_0,d_0)$ in $W^{1,6}\times H^2\times H^3$.

Similarly from (\ref{dl3}), using Gronwall's inequality, (\ref{cl2})
and the convergence (\ref{unq1e})-(\ref{unq1f}), we obtain, for all $ t\in[0,T^*],$
\begin{eqnarray}
\|\overline{u}\|_{H^1}(t)+\int_0^{T^*}\|\sqrt{\rho}\overline{u_t}\|_{L^2}^2\mathrm{d}t\rightarrow 0\label{unq1g}
\end{eqnarray}
as $(\widetilde{\rho}_0,\widetilde{u}_0,\widetilde{d}_0)\rightarrow(\rho_0,u_0,d_0)$ in $W^{1,6}\times H^2\times H^3$.

Multiplying the difference between the continuity equations by $6\overline{\rho}^5$, integrating over $(0,t)\times
\Omega$ and then using Gronwall's inequality, it follows from the
estimate (\ref{cl2}) and the convergence (\ref{unq1g}) that for all $t\in[0,T^*]$,
\begin{eqnarray}
\|\rho-\widetilde{\rho}\|_{L^6}(t)\rightarrow 0.\label{unq1h}
\end{eqnarray}
From the equations (\ref{dl1}) and (\ref{yj1}), by a simple discussion, we can obtain for all $ t\in[0,T^*],$
\begin{eqnarray}
\|\overline{d}\|_{L^2}(t),\  \|\overline{d}\|_{L^2(0,T^*; H^3)},\ \|\overline{u}\|_{L^2(0,T^*; H^2)}\ \ \rightarrow 0 \label{unq1i}
\end{eqnarray}
as $(\widetilde{\rho}_0,\widetilde{u}_0,\widetilde{d}_0)\rightarrow(\rho_0,u_0,d_0)$ in $W^{1,6}\times H^2\times H^3$.

In conclusion, (\ref{unq1e})-(\ref{unq1i}) complete the proof of the
continuity in Theorem 1.


\section{Proof of Theorem \ref{globalex}}

Suppose that there are two positive constants $\theta(<1)$ and
$\widetilde{C}$ such that
\begin{eqnarray}
&&\max\{\|\rho_0\|_{W^{1,6}},\|u_0\|_{H^2},\|d_0-m\|_{H^3}, \|g\|_{L^2}^2\}<\theta,\label{globaex1a}\\
&&\sup_{0\leq t\leq T}(\|v\|_{H^2}+\|n-m\|_{H^2}+\|n_t\|_{H^1_0})
+\int_0^T\|\bigtriangledown v_t\|^2_{L^2}+\|v\|^2_{W^{2,6}}\nonumber\\
&&+\|\bigtriangledown^2 n_t\|^2_{L^2}+\|n\|^2_{H^3}\mathrm{d}t<\widetilde{C}.\label{globalex1b}
\end{eqnarray}
In this section, we assume the genuine constant $C$, maybe depending
on the constant $M(1)$ which is defined by (\ref{yqcs}).

By Lemma \ref{lm1}, there exists a small $\theta_1(< 1)$ so that
$\forall t\in [0,T],\ \forall  \theta\in(0,\theta_1],$
\begin{eqnarray}
\begin{array}{llll}
\|\rho\|_{W^{1,6}}\leq C{\theta}^{\frac{1}{2}},&\|\rho_t\|_{L^6}\leq C{\theta}^{\frac{1}{3}},&
\|p\|_{W^{1,6}}\leq C{\theta}^{\frac{1}{2}},&\|p_t\|_{L^6}\leq  C{\theta}^{\frac{1}{3}},
\end{array}
\label{globalex1c}
\end{eqnarray}
From Lemma \ref{lm2}, we can find a small $\theta_2(< 1)$ so that
$\forall \theta\in(0,\theta_2],$
\begin{eqnarray}
\|d_t\|^2_{H^1}(t),\  \|d-m\|^2_{H^2}(t),\ \int_0^t\|d-m\|^2_{H^3}\mathrm{d}\tau\  \leq C\theta^{\frac{1}{2}},\ \forall t\in [0,T].\label{globalex1d}
\end{eqnarray}
By Lemma \ref{lm3}, a small
$\theta_3(\leq\min\{\theta_1,\theta_2\})$ also can be found so that
$\forall \theta \in (0,\theta_3]$,
\begin{align}
\|u\|^2_{H^2}(t),\ \|\sqrt{\rho}u_t\|^2_{L^2},\ \int_0^t\|u_t\|^2_{H^1}\mathrm{d}\tau,\ \int_0^t\|u\|^2_{W^{2,6}}\mathrm{d}\tau\ \leq C\theta^{\frac{1}{6}},\ \forall t\in [0,T].\label{globalex1e}
\end{align}

Thanks to the estimates (\ref{globalex1c})-(\ref{globalex1e}), using
the Theorem \ref{tma}, we can obtain the global strong solution of
the linear system (\ref{md2})-(\ref{dl2}) with initial boundary
value (\ref{cz1}) and (\ref{bz1}) provided
\begin{eqnarray}
\max\{\|\rho_0\|_{W^{1,6}},\|u_0\|_{H^2},\|d_0-m\|_{H^3}, \|g\|_{L^2}^2\}\leq \theta,\nonumber
\end{eqnarray}
where $\forall\theta\in(0,\theta_3]$.

Now let's talk about the iteration. First, we notice that if $\theta_3$ is taken so small that $C\theta_3\leq \widetilde{C}$, then the process of iteration can be continued for the same $\theta_3$.
Next we will pay attention to the convergence of the iteration.

As the same process in  the section 3, using the estimates
(\ref{globalex1c})-(\ref{globalex1e}), we can obtain,
like (\ref{sc6}),
\begin{align}
&\Psi^{k+1}(t)+\int _0^t( \|\bigtriangledown\overline{d}^{k+1}\|^2_{L^2}+  \|\bigtriangledown^2\overline{d}^{k+1}\|_{L^2}^2+ \| \bigtriangledown  \overline{u}^{k+1}\|_{L^2}^2) \mathrm{d}s                       \nonumber\\
\leq&C\eta\exp(C(t+\theta^\frac{1}{6}t+\theta^\frac{1}{6}+\eta^{-1}\theta^\frac{1}{2}t+\eta^{-1}\theta^\frac{1}{2}))\int_0^t( \|\bigtriangledown \overline{u}^k\|^2_{L^2}+ \|\bigtriangledown\overline{d}^k\|_{L^2}^2 )\mathrm{d}\tau,\label{globalex1f}
\end{align}
where
\begin{eqnarray}
\Psi^{k+1}=\|\overline{\rho}^{k+1}\|^2_{L^2}+\|\overline{d}^{k+1}\|^2_{L^2}+\|\bigtriangledown\overline{d}^{k+1}\|^2_{L^2}
+\|\sqrt{\rho^{k+1}}\overline{u}^{k+1}\|^2_{L^2} .\nonumber
\end{eqnarray}
Hence choose small constants $\eta,\ \theta_0$, so that $\forall \theta\in(0,\theta_0]$ and $\forall t\in[0,T],$
\begin{eqnarray}
C\eta\exp(C(t+\theta^\frac{1}{6}t+\theta^\frac{1}{6}+\eta^{-1}\theta^\frac{1}{2}t+\eta^{-1}\theta^\frac{1}{2}))
\leq \frac{1}{2},\label{globalex1h}
\end{eqnarray}
 We easily deduce that $\forall t\in[0,T],$
\begin{eqnarray}
&&\sum_{k=1}^{\infty}\sup_{0\leq t\leq T}\Psi^{k+1}(t)+\sum_{k=1}^{\infty}\int _0^{T}( \|\bigtriangledown\overline{d}^{k+1}\|^2_{L^2}+  \|\bigtriangledown^2\overline{d}^{k+1}\|_{L^2}^2+ \| \bigtriangledown  \overline{u}^{k+1}\|_{L^2}^2) \mathrm{d}s \nonumber\\
&&\leq C\leq\infty.\label{globalex1i}
\end{eqnarray}
So we complete the proof of Theorem \ref{globalex}.




\begin{thebibliography}{99}
\bibitem{liu}{C. Liu, X.G. Liu, J. Qing, Existence of globally weak solutions to the flow of compressible liquid crystals system, preprint.}
\bibitem{lp2}{P.L. Lions, Mathematical topics in fluid dynamics, Vol.2. Compressible models. Oxford Science Publication, Oxford, 1998.}
\bibitem{feireisl1}E. Feireisl, Dynamics of viscous compressible fluids, Oxford University Press, Oxford, 2004.
\bibitem{ll1}{F.H. Lin, C. Liu, Nonparabolic dissipative systems modeling the flow of liquid crystals, Comm. Pure Appl. Math. 48(1995) 501-537.}
\bibitem{lc}M.C. Calderer, C. Liu, Liquid crystal flow: dynamic and static configurations, SIAM J. Appl. Math. 60(2000) 1073-1089.
\bibitem{ll3}{F.H. Lin, C. Liu, Existence of solutions for the Ericksen-Leslie system, Arch. Rational Mech. Annal. 154(2000) 135-159.}
\bibitem{ll2}{F.H. Lin, C. Liu, Static and dynamic theories of liquid crystals, J. Partial Differential Equations 14(2001) 289-330.}
\bibitem{coutand} D. Coutand, S. Shkoller, Well posedness of the full Ericksen-Leslie model of nematic liquid crystals, Note C.R.A.S, t. Serie I 333 (2001) 919-924.
\bibitem{wen} {H.Y. Wen, S.J. Ding, Solutions of incompressible hydrodynamic flow of liquid crystals, Nonlinear Analysis: Real World Applications, article in press(2010).}
\bibitem{ll0}{F.H. Lin, J. Lin, C.Y. Wang, Liquid crystals flows in two dimensions, Arch. Rational Mech. Anal. 197(2010) 297-336.}
 \bibitem{salvi}{R. Salvi, I. Str$\breve{a}$skraba, Global existence for viscous compressible fluids and their behavior as $t\rightarrow \infty$, J. Fac. Sci. Univ. Tokyo Sect. IA Math. 40(1993) 17-51.}
\bibitem{kim3}H.J. Choe, H. Kim, Strong solutions of the Navier-Stokes equations for isentropic compressible fluids, J. Differential Equations 190(2003) 504-523.
\bibitem{liu1}{X.G. Liu, L.M. Liu, A blow-up criterion for the compressible liquid crystals system, arXiv:1011.4399v2(2010).}
\bibitem{kim2} Y. Cho, H.J. Choe, H. Kim, Unique solvability of the initial boundary value problems for compressible viscous fluids, J. Math. Pures Appl. 83(2004) 243-275.
\bibitem{kim1}Y. Cho, H. Kim, Existence results for viscous polytropic fluids with vacuum, J. Differential Equations 228(2006) 377-411.
\bibitem{temam}{R. Temam, Navier-Stokes equations, rev.ed., Studies in Mathematics and its Applications 2, Amsterdam, 1977.}


















\end{thebibliography}
\end{document}